\documentclass[11pt, a4paper]{amsart}
\usepackage{amsfonts}
\usepackage{amscd,amsmath}
\usepackage{amssymb}
\usepackage{setspace}
\usepackage{fullpage}
\usepackage{amsfonts}
\usepackage{amscd,amsmath}
\usepackage{amssymb}
\usepackage{tikz-cd}
\usepackage{mathtools}
\textwidth 400 pt
\newtheorem{theorem}{Theorem}[section]
\newtheorem{lemma}[theorem]{Lemma}

\newtheorem{proposition}[theorem]{Proposition}
\newtheorem{definition}[theorem]{Definition}

\newtheorem*{TI*}{Topological Interpretation}

\theoremstyle{remark}
\newtheorem{remark}{Remark}[section]
\newtheorem*{rem*}{Remark}

\newcommand{\comment}[1]{}
\pagestyle{plain} \pdfpagewidth 9in \pdfpageheight 12in
\numberwithin{equation}{section}
\DeclareMathOperator{\Um}{Um}
\DeclareMathOperator{\fr}{first}
\DeclareMathOperator{\co}{column}
\DeclareMathOperator{\of}{of}

\begin{document}
\title{Quillen Suslin theory for algebraic fundamental group}
\author{RAJA SRIDHARAN $^1$, Sumit Kumar Upadhyay $^2$  AND SUNIL KUMAR YADAV$^3$\vspace{.4cm}\\
{$^{1}$School Of Mathematics,\\ Tata Institute of fundamental Research,\\Colaba,  Mumbai, India \vspace{.3cm}\\ $^{2}$Department of Applied Sciences,\\ Indian Institute of Information Technology Allahabad, \\Prayagraj, U. P., India\\} \vspace{.3cm}\\ $^{3}$Department of Mathematics, \\ Lady Sri Ram College of Women, \\University of Delhi, India}
\thanks{$^1$sraja@math.tifr.res.in, $^2$upadhyaysumit365@gmail.com, $^3$skymath.bhu@gmail.com}
\thanks {2020 Mathematics Subject classification: 13A02, 13F20, 57M05, 57M10}

\begin{abstract}
In this paper, we attempt to develop the Quillen Suslin theory for the algebraic fundamental group of a ring. We give a surjective group homomorphism from the algebraic fundamental group of the field of the real numbers to the group of integers. At the end of the paper, we also propose some problems related to the algebraic fundamental group of some particular type of rings.
\end{abstract}
\maketitle
\textbf{Keywords}:  Groups, Fundamental groups, Covering spaces, Unimodular rows, Invertible matrices.
\section{Introduction}
There is no general method to solve the  basic topological problem ``whether two given topological spaces are homeomorphic" but there exist techniques that apply in particular cases. If there is some topological property that holds for one space but not for the other, then the spaces cannot be homeomorphic. Later, a more general idea came into the picture which involves a certain group that is called the fundamental group of the space. The task of computing some fundamental groups are not trivial.  
The notion of covering space is one of the most useful tools for computing some fundamental groups. In \cite{RSS}, for a commutative ring $R$ with identity, we introduced the known groups $\pi_1 (SL_2(R))$ and $\Gamma(R)$ to prove an algebraic analogue of the Mayer Vietoris sequence. By the definition, the group $\pi_1 (SL_2(R))$ looks like an algebraic analogue of the fundamental group $\pi_1 (SL_2(\mathbb{R}), I_2)$ of the space $SL_2(\mathbb{R})$ via polynomial paths, where $\mathbb{R}$ is the field of real numbers. In this paper, we call the group $\pi_1 (SL_2(R))$ as the algebraic fundamental group of $R$. We do not know the accurate history of the ideas of these groups. However, we are interested in explaining how these ideas arose to us. We apologize for any inaccuracies.

Let $X$ and $Y$ be topological spaces, and $\text{Cont}(X, Y)$ be the set of all continuous maps from $X$ to $Y$. To understand the meaning of the group $\pi_1 (SL_2(R))$, one consider the following idea for a topological space $X$, the set $\text{Cont}(X, \mathbb{R})$ is a ring. Also $\text{Cont}(X, \mathbb{R}^n)\cong \oplus_n \text{Cont}(X, \mathbb{R})$. Similarly, the set $\text{Cont}(X, \pi_1 (SL_2(\mathbb{R})))$ is $\pi_1(SL_2($ $\text{Cont}(X, \mathbb{R})))$. Since $\pi_1 (SL_2(\mathbb{R}))$ is $\mathbb{Z}$, $\pi_1(SL_2($ $\text{Cont}(X, \mathbb{R})))$ is $\text{Cont}(X, \mathbb{Z})$. Therefore, the group $\pi_1 (SL_2(R))$ is related to the connectedness. This is probably well known and possibly one of the motivations for considering the group $\pi_1 (SL_2(R))$.  In this article, we will try to develop the Quillen Suslin for the group $\pi_1 (SL_2(R))$, for a ring $R$.

Now, we give a motivation behind the definition of the group $\pi_1 (SL_2(R))$ due to Nori using the concept of construction of universal covering space for a topological space (for universal covering space we refer \cite{JR}). 

Consider the field of complex numbers $\mathbb{C}$ and the exponential map $\exp : \mathbb{C} \rightarrow \mathbb{C}^*$, where $\mathbb{C}^* = \mathbb{C} \setminus \{0\}$. Then we have the following short exact sequence of groups
$$0 \rightarrow \mathbb{Z} \rightarrow \mathbb{C} \overset{\exp}\rightarrow \mathbb{C}^* \rightarrow 0 \hspace{2cm} (1)$$
Since every function from $X$ to $\mathbb{C}^*$ need not have logarithm, we have the
following exact sequence of sheaves
$$1 \rightarrow \text{Cont}(X, \mathbb{Z}) \rightarrow \text{Cont}(X, \mathbb{C}) \overset{\exp}\rightarrow \text{Cont}(X, \mathbb{C}^* ) \hspace{2cm} (2)$$
 because $\exp:\text{Cont}(X, \mathbb{C}) \rightarrow \text{Cont}(X, \mathbb{C}^* )$ need not be surjective in general but it is locally surjective.

Now, consider $\mathbb{R}^2\setminus \{(0, 0)\}$ instead of $\mathbb{C}^*$ and write a sequence similar to the sequences $(1)$ and $(2)$ by using the idea of construction of the universal covering space of a topological space. Let $(1, 0)\in \mathbb{R}^2\setminus \{(0, 0)\}$ and $\mathcal{P} = \{ \alpha: I \to \mathbb{R}^2\setminus \{(0, 0)\} \mid \alpha(0) = (1, 0) \}$, where $I$ is the unit interval $[0, 1]$. Define an equivalence relation $\sim$ in $\mathcal{P}$ as follows:

$\alpha \sim \beta$ if $\alpha(1) = \beta(1)$ and there exists $\gamma: I\times I  \to B$ such that 
\begin{center}
$\gamma(t, 0) = \alpha(t)$, $\gamma(t, 1) = \beta(t)$\\
$\gamma(0, s) = (1, 0)$, $\gamma(1, s) = \alpha(1) =\beta(1)$
\end{center}
Let $E$ denote the set of all equivalence classes, that is, $E = \{[\alpha] \mid  \alpha \in \mathcal{P} \}$. Define a map $p : E \rightarrow \mathbb{R}^2\setminus \{(0, 0)\}$ by $p([\alpha]) = \alpha(1)$. Since $\mathbb{R}^2\setminus \{(0, 0)\}$ is path connected, $p$ is surjective. Now, $p^{-1}((1, 0)) = \{[\alpha]  \in E \mid  \alpha(1) = (1, 0) \} = \{[\alpha]  \in E \mid  \alpha(0) = \alpha(1) = (1, 0) \}$ which is the fundamental group $\pi_1 (\mathbb{R}^2\setminus \{(0, 0)\})$. Thus we have the following exact sequence of groups
$$0 \rightarrow \pi_1 (\mathbb{R}^2\setminus \{(0, 0)\}) \rightarrow E \overset{p}\rightarrow \mathbb{R}^2\setminus \{(0, 0)\} \rightarrow 0 \hspace{2cm} (3)$$

Let $R$ be a ring and $Um_2 (R)$ be the set of all unimodular rows of length $2$ (an element $(a, b)$ of $R^2$ is unimodular if $\langle a, b \rangle = A$). If $R$ is the field of real numbers $\mathbb{R}$, then $Um_2 (\mathbb{R})$ is  $\mathbb{R}^2\setminus \{(0, 0)\}$ and hence $\pi_1 (\mathbb{R}^2\setminus \{(0, 0)\})= \pi_1 (Um_2 (\mathbb{R}))$. In fact, we can rewrite the sequence $(3)$ as  
$$0 \rightarrow \pi_1 (Um_2 (\mathbb{R})) \rightarrow E \overset{p}\rightarrow Um_2 (\mathbb{R}) \rightarrow 0 \hspace{2cm} (3')$$
 
Now, suppose $R$ is the coordinate ring of a real affine variety $X$ with real points $X(\mathbb{R})$. Then from the sequence $(3')$, we have \\
$1 \rightarrow \text{Cont}(X(\mathbb{R}), \pi_1 (Um_2 (\mathbb{R})) \rightarrow \text{Cont}(X(\mathbb{R}), E) \overset{p}\rightarrow \text{Cont}(X(\mathbb{R}), Um_2 (\mathbb{R})) \hspace{1cm} (4)$

Since every unimodular row gives a continuous map from $X(\mathbb{R})$ to $\mathbb{R}^2 \setminus \{(0, 0)\}$, we can say that  $Um_2 (R)$ is the algebraic analogue of $\text{Cont}(X(\mathbb{R}), \mathbb{R}^2 \setminus \{(0, 0)\})$. Therefore, the sequence $(4)$ is the following\\
$$1 \rightarrow \pi_1 (Um_2 (R)) \rightarrow E' \overset{p}\rightarrow Um_2 (R)\hspace{2cm}  (4')$$
where $\pi_1 (Um_2 (R))$ is the algebraic analogue of $\text{Cont}(X(\mathbb{R}), \pi_1 (\mathbb{R}^2\setminus \{(0, 0)\}))$ and $E'$ is the algebraic analogue of $\text{Cont}(X(\mathbb{R}), E)$. The above is only a heuristic remark. A. Bak and A. S. Garge \cite{BG} developed the notion of the group $\pi_1 (Um_n (R))$, for a ring $R$,  using the theory of global actions which is applicable in many other contexts. We now outline Nori's approach which involves considering the group $SL_2 (R)$ instead of $Um_2(R)$

Consider the space $B= SL_2 (\mathbb{R})$ and $\mathcal{P} = \{ \alpha(T) \in SL_2 (\mathbb{R}[T]) \mid \alpha(0) = I_2\}$. Define an equivalence relation $\sim$ in $\mathcal{P}$ as follows:

$\alpha(T) \sim \beta(T)$ if $\alpha(1) = \beta(1)$ and there exists $\gamma(T, S) \in SL_2 (\mathbb{R}[T, S])$ such that 
\begin{center}
$\gamma(T, 0) = \alpha(T)$, $\gamma(T, 1) = \beta(T)$\\
$\gamma(0, S) = I_2$, $\gamma(1, S) = \alpha(1) =\beta(1)$
\end{center}
Let $E$ denote the set of all equivalence classes, that is, $E = \{[\alpha(T)] \mid  \alpha(T) \in \mathcal{P} \}$. Define a map $p : E \rightarrow B$ by $p([\alpha(T)]) = \alpha(1)$. 
Now, $p^{-1}(I_2) = \{[\alpha(T)]  \in E \mid  \alpha(1) = I_2 \} = \{[\alpha(T)]  \in E \mid  \alpha(0) = \alpha(1) = I_2 \}$ which is the fundamental group $\pi_1(SL_2(\mathbb{R}))$ via the polynomial paths. Thus we have the following exact sequence of groups
$$0 \rightarrow \pi_1(SL_2(\mathbb{R})) \rightarrow E \overset{p}\rightarrow SL_2 (\mathbb{R}) \rightarrow 0\hspace{2cm}  (5)$$

Let $R$ be the coordinate ring of a real affine variety $X$ with real points $X(\mathbb{R})$. Then from the sequence $(5)$, we have \\
$$1 \rightarrow \text{Cont}(X(\mathbb{R}), \pi_1(SL_2(\mathbb{R}))) \rightarrow \text{Cont}(X(\mathbb{R}), E) \overset{p}\rightarrow \text{Cont}(X(\mathbb{R}), SL_2 (\mathbb{R})) \hspace{1cm} (5')$$

Since every element of $SL_2(R)$ gives a continuous map from $X(\mathbb{R})$ to $SL_2(\mathbb{R})$, we can say that  $SL_2 (R)$ is the algebraic analogue of $\text{Cont}(X(\mathbb{R}), SL_2(\mathbb{R}))$ and $\pi_1(SL_2(R))$ is the algebraic analogue of $\text{Cont}(X(\mathbb{R}), \pi_1(SL_2(\mathbb{R})))$. Thus the sequence $(5')$ is 
$$1 \rightarrow \pi_1(SL_2(R)) \rightarrow G'(R) \overset{p}\rightarrow SL_2(R) \hspace{1cm} (5'')$$
In the sequence $(5'')$,   $G'(R) = \{[\alpha(T)] \mid  \alpha(T) \in SL_2(R[T]) ~\text{with}~ \alpha(0) = I_2 \}$ where $[\alpha(T)]$ denotes the equivalence class of $\alpha(T) \in SL_2(R[T])$ with $\alpha(0)= I_2$ and equivalence relation $\sim$ is defined as follows:

$\alpha(T) \sim \beta(T)$ if $\alpha(1) = \beta(1)$ and there exists $\gamma(T, S) \in SL_2 (R[T, S])$ such that 
\begin{center}
$\gamma(T, 0) = \alpha(T)$, $\gamma(T, 1) = \beta(T)$\\
$\gamma(0, S) = I_2$, $\gamma(1, S) = \alpha(1) =\beta(1)$.
\end{center} Since  the actual fundamental group of  $SL_{2}(\mathbb{R})$ is isomorphic to $\mathbb{Z}$, we can say that $\pi_1 (SL_{2}(R))$ is the set of all continuous maps from $X(\mathbb{R})$ to $\mathbb{Z}$. That is $\pi_1 (SL_{2}(R))$ can be thought of as an algebraic analogue of the topological group $H^0 (X(\mathbb{R}), \mathbb{Z})$. Nori outlined these above ideas to the first author in the early nineties. Nori wanted to know if one could  use the connecting homomorphism $H^1(-, SL_{2}(R))$ to $H^2(-, \pi_1 (SL_{2}(R)))$ associated to the exact sequence $(5'')$ to define Euler class of rank two algebraic vector bundles over two dimensional ring.

Let $U_1$ and $U_2$ be two open sets of a topological space $X$. Then we have the following Mayer-Vietoris sequence  

$H^0(U_1\cup U_2, \mathbb{Z})\rightarrow H^0(U_1, \mathbb{Z})\oplus H^0(U_2, \mathbb{Z})\rightarrow H^0(U_1\cap U_2, \mathbb{Z}){\overset{\phi}\rightarrow} H^1(U_1\cup U_2, \mathbb{Z})\rightarrow 
H^1(U_1, \mathbb{Z})\oplus H^1(U_2, \mathbb{Z})\rightarrow H^1(U_1\cap U_2, \mathbb{Z})\hspace{2cm} (6)$

Regarding the details of the groups $H^0(Y, \mathbb{Z})$ and $H^1(Y, \mathbb{Z})$ for a topological space $Y$, and the Mayer-Vietoris sequence we refer \cite{CTC}, where $H^1(Y, \mathbb{Z})$ is the group of homotopy classes of continuous maps from $Y$ to $\mathbb{C^*}$. In the sequence $(6)$, the connecting map $\phi: H^0(U_1\cap U_2, \mathbb{Z}) \rightarrow H^1(U_1\cup U_2, \mathbb{Z})$ is defined as follows: let $f \in H^0(U_1\cap U_2, \mathbb{Z})$. Since $f: U_1\cap U_2 \rightarrow \mathbb{Z}$, we can think it as a map from $ U_1\cap U_2 \rightarrow \mathbb{C}$. Then $f = {f_1}_{|U_1\cap U_2} - {f_2}_{|U_1\cap U_2}$, for some $f_i : U_i \rightarrow \mathbb{C}$, $i=1, 2$. Now, we define $$\phi(f) = \begin{cases}
\exp{(2\pi i f_1 (x))}, ~\text{if}~ x\in U_1\\
\exp{(2\pi i f_2 (x))}, ~\text{if}~ x\in U_2
\end{cases}$$
It is clear that $\phi(f): U_1\cup U_2 \rightarrow \mathbb{C^*}$, that is, $\phi(f)\in H^1(U_1\cup U_2, \mathbb{Z})$.
Let $R$ be a ring and $(a, b) \in Um_2(R)$. Suppose $R$ is the coordinate ring of a real affine variety $X$ with real points $X(\mathbb{R})$. Then the rings $R_a$ and $R_b$ are the coordinate rings of some open subsets $U_1$ and $U_2$ of $X(\mathbb{R})$, respectively. Also, $R_{ab}$ is the coordinate ring of open subset $U_1\cap U_2$. From the above discussion, we  have seen that $\pi_1 (SL_{2}(-))$ is an algebraic analogue of $H^0(-, \mathbb{Z})$ and in \cite{RSS}, we mentioned that $\Gamma(-)$ is an algebraic analogue of  $H^1(-, \mathbb{Z})$. Hence, from the sequence $(6)$, we have 

$\pi_1 (SL_{2}(R))\rightarrow \pi_1 (SL_{2}(R_a))\oplus \pi_1 (SL_{2}(R_b))\rightarrow \pi_1 (SL_{2}(R_{ab})){\overset{\delta}\rightarrow} \Gamma(R)\rightarrow \Gamma(R_a)\oplus \Gamma(R_b)\rightarrow \Gamma(R_{ab})\hspace{2cm} (6')$

The motivation for defining the connecting map $\delta: \pi_1 (SL_{2}(R_{ab}))\rightarrow \Gamma(R)$ using  $\phi$ is as follows. 
Let $[\beta(T)] \in \pi_1 (SL_{2}(R_{ab}))$. Then from the above discussion we can think $[\beta(T)]$ as a map from $U_1\cap U_2$ to $\mathbb{Z}$. From the sequence $(5'')$, we have 
$$0 \rightarrow \pi_1(SL_2(R_{ab})) \rightarrow G'(R_{ab}) \overset{p}\rightarrow SL_2(R_{ab}) \hspace{1cm} $$
By this sequence, it is clear that $[\beta(T)] \in G'(R_{ab})$. Since $G'(R)$ is an algebraic analogue of $\text{Cont}(X(\mathbb{R}), \mathbb{C})$, we can think $[\beta(T)]$ as a map from $U_1\cap U_2$ to $\mathbb{C}$. As in the case of $\phi$, we want to write $\beta(T) = \beta_1(T) \beta_2^{-1}(T)$, where $\beta_1(T)\in SL_2(R_{a}[T])$ and $\beta_2(T)\in SL_2(R_{b}[T])$. This one we will get by Quillen splitting. Further since $\beta(1) = I_2$, we patch $\beta_1(1) = \beta_2(1)$ to get an element $\Gamma(R)$. This is similar to the use of the exponential map in the case of the definition  of $\phi$. 

We would like to remark that many of the ideas in the paper are due to Krusemeyer \cite{MK}, and Karoubi and Villamayor \cite{KV}. In particular, the sequence $(5'')$ is mentioned on page 21 of \cite{MK} in a more general context.

For a ring $R$, the main aim of this paper is to explore the group $\pi_1 (SL_{2}(R))$ in details analogous to the group $H^0 (X, \mathbb{Z}) \cong \mathbb{Z}^r$, where $r$ is the number of connected components of $X$. Throughout the paper a ring means a commutative ring with identity and $SL_{2}(R)$ denotes the set of all $2\times 2$ determinant $1$ matrices with entries from $R$.
\section{Preliminaries}
We start this section with the definitions of the groups $\pi_1 (SL_2 (R))$ and $\Gamma(R)$ from \cite{RSS}. 
\begin{definition}
Let $R$ be a ring and $L$ be the set of all loops in $SL_{2}(R)$ starting and ending at the identity matrix $I_2$, that is, $L =\{\alpha(T)\in SL_{2}(R[T])\mid \alpha(0) = \alpha(1) = I_2\}$. We say that two loops $\alpha(T), \beta(T) \in L$ are equivalent (that is, written as $\alpha(T)\sim \beta(T)$) if there exists $\gamma(T, S)\in SL_{2}(R[T, S])$ such that $\gamma(T, 0) = \alpha(T), \gamma(T, 1) = \beta(T)$ and $\gamma(0,S) = \gamma(1, S) = I_2$. We call $\gamma(T, S)$ as a homotopy between $\alpha(T)$ and $\beta(T)$.

For a ring $R$, the set of all equivalence classes of loops based on $I_2$ forms an abelian group with respect to  the binary operation `$*$' defined by $[\alpha(T)]*[\beta(T)] = [\alpha(T)\beta(T)]$. We denote it by $\pi_1 (SL_{2}(R))$ and call it the algebraic fundamental group of $R$. 
\end{definition}
Before starting the definition of the group $\Gamma(R)$, we define unimodular rows over a ring.
\begin{definition}\label{unimodular}
Let $R$ be a ring and $(a_1,a_2,\ldots,a_n)\in R^n$. We call the row $(a_1,a_2,\ldots,a_n)$ is unimodular of length $n$ if there exists $b_1,b_2,\ldots,b_n\in R$ such that $a_1b_1 + \cdots + a_nb_n = 1$. 
\end{definition}
\begin{definition}\label{completable}
Let $R$ be a ring and $(a_1,a_2,\ldots,a_n)$ be a unimodular row over $R$. We say the row $(a_1,a_2,\ldots,a_n)$ is completable (elementary completable) if there exists a matrix in $GL_n(R)$ $(E_n(R))$ whose first row or column is $(a_1,a_2,\ldots,a_n)$.
\end{definition}
\begin{definition}\label{def: G(A)}
We say that two unimodular rows $(a, b),~ (c, d)$ over $R$ are equivalent, written as $(a, b)\sim (c, d)$, if one (and hence both) of the following equivalent conditions holds:
\begin{enumerate}
\item there exists $(f_{1} (T), f_{2}(T)) \in \Um_2 (R[T])$ such that $(f_{1} (0), f_{2}(0)) = (a, b)$ and $(f_{1} (1), f_{2}(1)) = (c, d)$.
\item there exists a matrix $\alpha \in SL_2 (R)$ which is connected to the identity matrix (that is, there exists a matrix $\beta(T) \in SL_2 (R[T])$ such that $\beta(0) = I_2$ and $\beta(1) = \alpha$) such that
$\alpha \begin{pmatrix}
a \\ b
\end{pmatrix} = \begin{pmatrix}
c \\ d
\end{pmatrix}$.
\end{enumerate}

It is not hard to check that the relation $\sim$ is an equivalence relation. We denote  the equivalence class of $(a,b)$ by $[a,b]$. Let $\Gamma(R)$ be the set of all equivalence classes of unimodular rows given by the equivalence relation $\sim$ as above. Define a product $*$ in $\Gamma(R)$ as follows:

Let $(a, b),~ (c, d) \in \Um_2 (R)$. Complete these to $SL_2(R)$ matrices $\sigma = \begin{pmatrix}
a & e \\ b & f
\end{pmatrix} $ and $\tau = \begin{pmatrix}
c & g \\ d & h
\end{pmatrix}$. We define the product of two elements $[a,b], [c,d]\in \Gamma(R)$ as follows: $$[a, b] * [c, d] = [\fr\;\co\;\of~ \sigma\tau] = [ac+de, bc+df].$$

Then $(\Gamma(R), *)$ is a group and the identity element of the group $\Gamma(R)$ is $[1, 0]$ (for details see \cite{RSS}). We called it as algebraic cohomotopy group of $R$.
\end{definition}
Now we would like to mention the following two results which are useful for the article.
\begin{lemma}[\cite{DQ}](Quillen's Splitting)\label{Quillen}
Let $R$ be a ring and $s,t\in R$ be such that $sR + tR = R$. Suppose there exists $\sigma(X)\in GL_n(R_{st}[X])$ with the property that $\sigma(0) = I_n$. Then there exist $\psi_1(X)\in GL_n(R_{s}[X])$ with $\psi_1(0) = I_n$ and $\psi_2(X)\in GL_n(R_{t}[X])$ with $\psi_2(0) = I_n$ such that $\sigma(X) = (\psi_1(X))_t(\psi_2(X))_s$.
\end{lemma}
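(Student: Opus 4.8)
The plan is to reduce everything to a single descent statement — a ``Key Lemma'' — and then to glue two local factors by exploiting the comaximality $sR+tR=R$.

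First, I would isolate and prove the following Key Lemma: for a commutative ring $A$, an element $f\in A$, and $\theta(X)\in GL_n(A_f[X])$ with $\theta(0)=I_n$, there is an integer $N$ so that whenever $a,b\in A$ satisfy $a\equiv b\pmod{f^N A}$, the matrix $\theta(aX)\theta(bX)^{-1}$ is the image of a matrix in $GL_n(A[X])$ that equals $I_n$ at $X=0$. To see this, set $\Theta(X,Y)=\theta(X)\theta(Y)^{-1}\in GL_n(A_f[X,Y])$. Since $\Theta(Y,Y)=I_n$, each entry of $\Theta(X,Y)-I_n$ vanishes on $X=Y$ and is therefore divisible by the monic polynomial $X-Y$, so $\Theta(X,Y)=I_n+(X-Y)\Xi(X,Y)$ for some $\Xi\in M_n(A_f[X,Y])$. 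Because $\Xi$ has only finitely many coefficients, I may choose $N$ with $f^N\Xi\in M_n(A[X,Y])$. Substituting $X\mapsto aX$ and $Y\mapsto bX$ gives $\theta(aX)\theta(bX)^{-1}=I_n+(a-b)X\,\Xi(aX,bX)$; if $a-b=f^N r$ then $(a-b)\Xi(aX,bX)=r\big(f^N\Xi(aX,bX)\big)\in M_n(A[X])$, so the matrix lies in $M_n(A[X])$ and is $I_n$ at $X=0$. Running the identical computation on the inverse $\theta(bX)\theta(aX)^{-1}$ (using $b-a\in f^N A$) shows the inverse is also defined over $A[X]$, whence the matrix lies in $GL_n(A[X])$.

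Second, I would apply the Key Lemma twice, using $R_{st}=(R_s)_t=(R_t)_s$: once with $A=R_s$ and $f=t$ to get an integer $N_1$, and once with $A=R_t$ and $f=s$ to get $N_2$; then put $N=\max(N_1,N_2)$. From $sR+tR=R$ one gets $s^N R+t^N R=R$, so I can fix $u,v\in R$ with $u+v=1$, $u\in s^N R$ and $v\in t^N R$.

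Finally I would telescope. Since $u+v=1$ and $\sigma(0)=I_n$, we have $\sigma(X)=\big[\sigma((u+v)X)\sigma(uX)^{-1}\big]\big[\sigma(uX)\big]$. In the first bracket the two parameters differ by $v\in t^N R$, so the Key Lemma over $R_s$ (with $f=t$) produces $\psi_1(X)\in GL_n(R_s[X])$ with $\psi_1(0)=I_n$ whose image in $GL_n(R_{st}[X])$ is exactly this bracket, namely $(\psi_1(X))_t$. In the second bracket, writing $\sigma(uX)=\sigma(uX)\sigma(0)^{-1}$, the parameters differ by $u\in s^N R$, so the Key Lemma over $R_t$ (with $f=s$) produces $\psi_2(X)\in GL_n(R_t[X])$ with $\psi_2(0)=I_n$ whose image is $(\psi_2(X))_s$. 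This gives $\sigma(X)=(\psi_1(X))_t(\psi_2(X))_s$, as claimed. I expect the Key Lemma to be the only genuine obstacle: the crux is extracting the factor $X-Y$ from $\Theta(X,Y)-I_n$ and then clearing all denominators uniformly, so that a single congruence modulo $f^N$ forces descent; granting this, the comaximal splitting $1=u+v$ and the telescoping identity are purely formal.
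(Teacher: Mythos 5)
A preliminary remark on the baseline: the paper does not prove this lemma at all --- it is quoted from Quillen \cite{DQ} --- so the only meaningful comparison is with Quillen's original argument, and your proof has precisely its architecture: a uniform descent (``Key'') lemma for $\theta(aX)\theta(bX)^{-1}$ under a congruence $a\equiv b \pmod{f^N A}$, the observation that $sR+tR=R$ implies $s^N R+t^N R=R$, and the telescoping $\sigma(X)=\bigl[\sigma(X)\sigma(uX)^{-1}\bigr]\bigl[\sigma(uX)\sigma(0)^{-1}\bigr]$, whose two factors descend to $R_s$ and $R_t$ respectively. The comaximality and telescoping steps, and the extraction of the factor $X-Y$ by division by a monic polynomial, are correct as written. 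There is, however, one genuine gap, in the last inference of your Key Lemma: ``the inverse is also defined over $A[X]$, whence the matrix lies in $GL_n(A[X])$.''

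That inference is valid only when $A\to A_f$ is injective, i.e.\ when $f$ is a non-zero-divisor, and the zero-divisor case is squarely inside the lemma's scope: in the application $A=R_s$, $f=t$, and $t$ may well be a zero divisor in $R_s$. Two related problems arise. First, ``$f^N\Xi\in M_n(A[X,Y])$'' really means that the entries of $f^N\Xi$ lie in the image of $A[X,Y]\to A_f[X,Y]$, so your integral matrix $P(X)=I_n+rX\,\widetilde{\Xi}(aX,bX)$ depends on a choice of preimage $\widetilde{\Xi}$, well defined only up to $f$-power torsion. Second, the product of $P(X)$ with your integral lift of the inverse equals $I_n$ only after localization; over $A[X]$ it is $I_n$ plus a matrix killed by a power of $f$, which need not vanish, and $P(X)$ can genuinely fail to be invertible. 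Concretely: for $A=k[u,v]/(uv)$, $f=u$, $\theta=I_n$ (so $\Xi=0$), the legitimate preimage $\widetilde{\Xi}=vI_n$ of $f^N\Xi=0$ together with $a=u^N$, $b=0$, $r=1$ gives $P(X)=(1+vX)I_n\notin GL_n(A[X])$, since $v$ is not nilpotent. Your Key Lemma's statement is still true; the standard repair (in effect what \cite{DQ} does) is to keep the increment as a free variable instead of specializing at once. Set $E(X,Y,Z)=\theta\bigl((Y+f^NZ)X\bigr)\theta(YX)^{-1}$; your own identity gives $E=I_n+f^NXZ\,\Xi\bigl((Y+f^NZ)X,YX\bigr)$ and $E^{-1}=I_n-f^NXZ\,\Xi\bigl(YX,(Y+f^NZ)X\bigr)$, hence integral lifts $U,V\in M_n(A[X,Y,Z])$ of $E,E^{-1}$ of the exact form $I_n+XZ\cdot(\text{matrix over } A[X,Y,Z])$. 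Then $H=UV-I_n$ and $H'=VU-I_n$ are divisible by $Z$ and killed by some $f^m$ (they localize to zero and have finitely many coefficients); writing $H=\sum_{k\ge 1}H_kZ^k$ with $f^mH_k=0$ yields $H(X,Y,f^mZ)=\sum_{k\ge 1}f^{m(k-1)}(f^mH_k)Z^k=0$, and likewise $H'(X,Y,f^mZ)=0$, so $U(X,Y,f^mZ)\in GL_n(A[X,Y,Z])$. Now enlarge $N$ to $N+m$: if $a-b=f^{N+m}c$, then $\psi(X)=U(X,b,f^mc)$ lies in $GL_n(A[X])$, equals $I_n$ at $X=0$, and localizes to $\theta(aX)\theta(bX)^{-1}$. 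With this emendation --- vacuous whenever $A\to A_f$ is injective, where your proof is already complete --- your argument is correct and coincides with the cited one.
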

\begin{theorem}[\cite{RSS}]
Let $R$ be a ring and $(a, b)\in Um_2(R)$. Then we have the following algebraic analogue of the Mayer Vietoris sequence
$\pi_1 (SL_{2}(R))\longrightarrow \pi_1 (SL_{2}(R_a)) \oplus \pi_1 (SL_{2}(R_b)) \longrightarrow\pi_1(SL_2(R_{ab}))\overset{\delta}{\rightarrow} \Gamma(R)\rightarrow \Gamma(R_a)\oplus \Gamma(R_b) \longrightarrow \Gamma(R_{ab})$.
\end{theorem}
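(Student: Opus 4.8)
The plan is to treat this as an algebraic Mayer--Vietoris sequence and prove it with two recurring tools. The first is Quillen's splitting (\lemref{Quillen}), which lets us factor a matrix, or a homotopy, defined over $R_{ab}$ and trivial at the origin as a product of a piece defined over $R_a$ and a piece defined over $R_b$. The second is the patching property coming from comaximality: since $aR+bR=R$, the sequence $0\to R\to R_a\oplus R_b\to R_{ab}\to 0$ (last map the difference of localizations) is exact, so $R$ is the fibre product $R_a\times_{R_{ab}}R_b$, and the same persists after adjoining $T$ or $T,S$. Consequently a matrix over $R_a$ and a matrix over $R_b$ that agree over $R_{ab}$ glue to a matrix over $R$, with determinants and specializations gluing as well. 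All the unlabelled arrows are induced by the localizations $R\to R_a,\ R\to R_b,\ R_a\to R_{ab},\ R_b\to R_{ab}$, the maps out of the direct sums being the differences of the two localizations so that successive composites vanish formally; the content of the theorem is exactness at the four interior nodes together with the construction of $\delta$.

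First I would pin down $\delta$ and check it is a well-defined homomorphism, which I expect to be the most delicate formal point. Given a class $[\beta(T)]\in\pi_1(SL_2(R_{ab}))$ I would use \lemref{Quillen} (in the $SL_2$ refinement discussed below) to write $\beta(T)=\beta_1(T)\,\beta_2(T)^{-1}$ with $\beta_1\in SL_2(R_a[T])$, $\beta_2\in SL_2(R_b[T])$ and $\beta_1(0)=\beta_2(0)=I_2$. Since $\beta(1)=I_2$, the matrices $\beta_1(1)$ and $\beta_2(1)$ agree over $R_{ab}$, so they patch to a single $M\in SL_2(R)$, and I would set $\delta([\beta])=[\fr\,\co\,\of\,M]\in\Gamma(R)$. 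To see independence of the splitting, if $\beta_1\beta_2^{-1}=\beta_1'\beta_2'^{-1}$ then $(\beta_1')^{-1}\beta_1=(\beta_2')^{-1}\beta_2$ over $R_{ab}$; patching the two sides gives an element of $SL_2(R)$ which is connected to $I_2$ through the $T$-family and which relates $M$ to $M'$, hence identifies their first columns in $\Gamma(R)$. Independence of the homotopy representative of $[\beta]$ is proved the same way after splitting the homotopy $\gamma(T,S)$, and the homomorphism property follows because the product in $\Gamma(R)$ is the first column of a product of completions, matching the product of patched matrices.

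For exactness I would treat the four interior nodes by a single template. In each case the inclusion $\operatorname{im}\subseteq\ker$ is formal: a row or loop coming from $R$ localizes compatibly, and for $\delta$ the splitting of a class already of the form $[\mu]_{ab}-[\nu]_{ab}$ may be taken with $\beta_1=\mu,\ \beta_2=\nu$, so $M=I_2$ and $\delta=[1,0]$. For the reverse inclusion $\ker\subseteq\operatorname{im}$ I would, at each node, start from the vanishing hypothesis, realize it by an explicit path or homotopy over $R_{ab}$ (for the $\Gamma$-nodes) or over the localizations (for the $\pi_1$-nodes), split that witness by \lemref{Quillen} into an $R_a$-part and an $R_b$-part, use these factors to modify the given data within its equivalence class until the two pieces agree over $R_{ab}$, and finally patch them into an object over $R$. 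For instance, at $\pi_1(SL_2(R_a))\oplus\pi_1(SL_2(R_b))$ a pair becoming homotopic over $R_{ab}$ is pushed, via a split homotopy, to a pair literally equal over $R_{ab}$ and then glued to a loop over $R$; at $\Gamma(R)$ a class dying in both $\Gamma(R_a)$ and $\Gamma(R_b)$ yields connecting paths over $R_a$ and $R_b$ whose $R_{ab}$-product is a loop hitting the class under $\delta$; and at $\Gamma(R_a)\oplus\Gamma(R_b)$ rows agreeing over $R_{ab}$ are glued directly to a row over $R$.

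The main obstacle I anticipate is determinantal bookkeeping in the splitting step. \lemref{Quillen} is stated for $GL_n$, whereas the groups here live in $SL_2$, so I must upgrade every application to produce factors of determinant $1$ with the correct normalization at $T=0$ (and at $S=0$ for homotopies); this requires correcting the $GL_2$-factors by suitable diagonal units without disturbing their endpoint values, and is precisely where the $2\times 2$, determinant-one setting must be handled by hand rather than quoted. The secondary difficulty, threaded through the reverse-inclusion arguments, is the ``push the homotopy to one side'' manoeuvre: converting an abstract $R_{ab}$-homotopy into an honest equality over $R_{ab}$ after changing representatives, which again rests on splitting the homotopy and on fibre-product patching, and which must be carried out so as to respect the based-loop conditions $\alpha(0)=\alpha(1)=I_2$.
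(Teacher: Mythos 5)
Your proposal is correct and follows essentially the same route as the source of this statement: the paper quotes the theorem from \cite{RSS} without reproving it, but the construction it sketches in the introduction for the connecting map $\delta$ --- Quillen splitting $\beta(T)=\beta_1(T)\beta_2(T)^{-1}$ with $\beta_1(T)\in SL_2(R_a[T])$, $\beta_2(T)\in SL_2(R_b[T])$, followed by patching $\beta_1(1)$ and $\beta_2(1)$ (which agree over $R_{ab}$ since $\beta(1)=I_2$) into a matrix over $R$ whose first column gives the class in $\Gamma(R)$ --- is exactly yours. The refinements you anticipate (normalizing the $GL_2$ factors of Lemma~\ref{Quillen} to determinant one, and applying the splitting to homotopies in the extra variable to get well-definedness and the four exactness statements) are likewise the standard devices of the \cite{RSS} proof, so there is no divergence of method to report.
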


\section{Algebraic fundamental group $\pi_1 (SL_2 (R))$}
Let $R$ and $S$ be two rings and $\phi : R \rightarrow S$ be a ring homomorphism with $\phi(1) =1$. Then we can extend $\phi$ to a ring homomorphism $\phi(T): R[T] \rightarrow S[T]$, where $\phi(T)$ is defined by $\phi(T) (\sum a_iT^i) =  \sum \phi(a_i)T^i$. We denote image of $f(T) \in R[T]$ under $\phi(T)$ by $\phi (f)(T)$. 

Let $\alpha(T) =\begin{pmatrix}
f_1 (T) & f_3 (T)  \\
f_2 (T) & f_4 (T) 
\end{pmatrix} \in SL_2 (R[T])$. Then $\begin{pmatrix}
\phi (f_1) (T) & \phi (f_3)(T)  \\
\phi (f_2) (T) & \phi (f_4) (T) 
\end{pmatrix} \in SL_2 (S[T])$. We denote it by $\phi (\alpha)(T)$. Therefore, we have a group homomorphism $\pi_1 (\phi): \pi_1 (SL_2 (R)) \rightarrow \pi_1 (SL_2 (S))$ defined by $\pi_1 (\phi)([\alpha(T)]) = [\phi (\alpha)(T)]$. 

Suppose $R$ is the coordinate ring of a real affine variety with real points $X(\mathbb{R})$. Then the real points of the ring $R[X]$ is $X(\mathbb{R})\times \mathbb{R}$. Since $X(\mathbb{R})$ and $X(\mathbb{R})\times \mathbb{R}$ have the same number of connected components, $H^0(X(\mathbb{R})) \cong H^0(X(\mathbb{R})\times \mathbb{R})$. The following theorem shows that an analogous result is also true for algebraic fundamental groups.

\begin{theorem}\label{polynomial}Let $R$ be a ring. Then
$\pi_1 (SL_2 (R[X])) \cong \pi_1 (SL_2 (R))$. Moreover, $\pi_1 (SL_2 (R[X_1, X_2, \cdots, X_n])) \cong \pi_1 (SL_2 (R))$, for all $n\in \mathbb{N}$.
\end{theorem}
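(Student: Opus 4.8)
The plan is to realize the isomorphism through the two evident ring maps between $R$ and $R[X]$ and to lean on the functoriality of $\pi_1(SL_2(-))$ recorded just above. Let $i : R \hookrightarrow R[X]$ be the inclusion and $\varepsilon : R[X] \to R$ the evaluation $X \mapsto 0$; both send $1$ to $1$, so they induce group homomorphisms $\pi_1(i)$ and $\pi_1(\varepsilon)$ going in opposite directions. Because $\varepsilon \circ i = \mathrm{id}_R$, functoriality gives $\pi_1(\varepsilon)\circ\pi_1(i) = \mathrm{id}$, so $\pi_1(i)$ is automatically injective. Hence the entire content of the theorem is the reverse composite: I must show $\pi_1(i)\circ\pi_1(\varepsilon) = \mathrm{id}$ on $\pi_1(SL_2(R[X]))$.

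To do this I would take an arbitrary class and write a representing loop as $\alpha(T,X)\in SL_2(R[X,T])$ with $\alpha(0,X) = \alpha(1,X) = I_2$. Its image under $\pi_1(i)\circ\pi_1(\varepsilon)$ is the class of the ``constant in $X$'' loop $\alpha(T,0)$, so the task is to build a homotopy between $\alpha(T,X)$ and $\alpha(T,0)$. The key step is the scaling homotopy
$$\gamma(T,S) := \alpha\bigl(T,(1-S)X\bigr),$$
obtained by applying the $R[X,T]$-algebra map $X\mapsto(1-S)X$ to each entry. Since this substitution is a ring homomorphism into $R[X,T,S]$, it preserves determinants, so $\gamma(T,S)\in SL_2(R[X,T,S])$. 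I would then verify the four boundary conditions of the homotopy relation: $\gamma(T,0)=\alpha(T,X)$ and $\gamma(T,1)=\alpha(T,0)$ are immediate at $S=0,1$, while $\gamma(0,S)=I_2$ and $\gamma(1,S)=I_2$ hold because the entries of $\alpha(0,X)$ and $\alpha(1,X)$ are the constants of $I_2$ and so are untouched by any substitution in $X$. This gives $[\alpha(T,X)]=[\alpha(T,0)]$ and thus the surjectivity of $\pi_1(i)$.

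Combining the two directions yields the isomorphism $\pi_1(SL_2(R))\cong\pi_1(SL_2(R[X]))$. The $n$-variable statement then follows by an immediate induction, applying the one-variable case to the tower $R\subset R[X_1]\subset R[X_1,X_2]\subset\cdots$, that is, viewing $R[X_1,\dots,X_n]$ as $R[X_1,\dots,X_{n-1}][X_n]$ at each stage.

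The step I expect to be the crux is the surjectivity direction, since in the topological statement this theorem mirrors, matching $\pi_0$-type invariants can require genuine effort. Here, however, the polynomial setting lets me contract the auxiliary variable $X$ to $0$ directly and algebraically, so the only point that really demands care is checking that the scaling substitution lands in $SL_2$ (determinant exactly $1$, not merely a unit) and that it respects the base-point conditions; both become routine once the substitution is viewed as an algebra homomorphism. I therefore anticipate no serious obstacle beyond writing down the correct homotopy.
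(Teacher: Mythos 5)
Your proposal is correct, and it is built on the same essential device as the paper's proof --- the substitution $X \mapsto (1-S)X$, which contracts the polynomial variable to $0$ inside $SL_2(R[X,T,S])$ --- but you organize the argument differently, and your packaging is tighter. The paper fixes only the evaluation map $\phi : R[X] \to R$, $f \mapsto f(0)$, proves surjectivity of $\pi_1(\phi)$ via constant loops, and then proves injectivity head-on: given two loops $\alpha(X)(T)$, $\beta(X)(T)$ whose evaluations at $X=0$ are homotopic via some $\theta(T,W) \in SL_2(R[T,W])$, it assembles the three-factor homotopy $M(X)(T)(W) = \alpha(X(1-W))(T)\,\theta(T,1-W)^{-1}\,\beta(XW)(T)$, which contracts both loops and splices in $\theta$ simultaneously. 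You instead exploit the section--retraction pair $(i,\varepsilon)$: the identity $\pi_1(\varepsilon)\circ\pi_1(i) = \mathrm{id}$ is free from functoriality, and the only genuine content is $\pi_1(i)\circ\pi_1(\varepsilon) = \mathrm{id}$, which needs just the single scaling homotopy $\gamma(T,S) = \alpha(T,(1-S)X)$ applied to one loop. Injectivity of $\pi_1(\varepsilon)$ then falls out formally from the group-homomorphism structure, with no need for the auxiliary homotopy $\theta$ or the product construction: if two loops have equivalent evaluations, apply $\pi_1(i)$ to that equivalence and use your identity on each loop separately. What the paper's more laborious route buys is an explicit homotopy between the two given loops, exhibited in one formula; what your route buys is economy and the standard ``homotopy inverse'' structure, making clear that $\pi_1(SL_2(-))$ simply cannot distinguish $R$ from $R[X]$ because $\varepsilon$ and $i$ are mutually inverse up to the scaling homotopy. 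Your handling of the boundary conditions (entries of $\alpha(0,X)$ and $\alpha(1,X)$ are constants, hence fixed by the substitution) and of the determinant (substitution is a ring homomorphism) is exactly right, and the induction for $n$ variables matches the paper's.
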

\begin{proof}
Consider the ring homomorphism $\phi : R[X] \rightarrow R$ defined by $\phi(f(X)) = f(0)$. So, we have a group homomorphism $\pi_1 (\phi): \pi_1 (SL_2 (R[X])) \rightarrow \pi_1 (SL_2 (R))$ defined by $\pi_1 (\phi)([\alpha(X)(T)]) = [\alpha(0)(T)]$.

Let $[\alpha(T)]\in \pi_1 (SL_2 (R))$. Then, for $\beta(X)(T) = \alpha(T)$, we have $\pi_1 (\phi)([\beta(X)(T)]) = [\beta(0)(T)]= [\alpha(T)]$. Hence, $\pi_1 (\phi)$ is surjective.

Suppose $\pi_1 (\phi)([\alpha(X)(T)]) = \pi_1 (\phi)([\beta(X)(T)])$. Thus $[\alpha(0)(T)] = [\beta(0)(T)]$ in $\pi_1 (SL_2 (R))$. So, there exists $\theta(T, W) \in SL_2 (R[T,W])$ such that $\theta(T, 0)=\alpha(0)(T), \\\theta(T, 1)=\beta(0)(T)$ and $\theta(0, W) = \theta(1, W)= I_2$. 

Consider $M(X)(T)(W) = \alpha(X(1-W))(T)\theta(T, 1-W)^{-1}\beta(XW)(T)$. Then 

$$M(X)(T)(0) = \alpha(X)(T)\theta(T, 1)^{-1}\beta(0)(T) = \alpha(X)(T)$$
$$M(X)(T)(1) = \alpha(0)(T)\theta(T, 0)^{-1}\beta(X)(T) = \beta(X)(T)$$
Since $\alpha(X)(0) = \beta(X)(0) = I_2$ and $\alpha(X)(1) = \beta(X)(1) = I_2$, we have
$$M(X)(0)(W) = \alpha(X(1-W))(0)\theta(0, 1-W)^{-1}\beta(XW)(0) = I_2$$
$$M(X)(1)(W) = \alpha(X(1-W))(1)\theta(1, 1-W)^{-1}\beta(XW)(1) = I_2.$$
This shows that $[\alpha(X)(T)] = [\beta(X)(T)]$ and hence, $\pi_1 (\phi)$ is injective. Hence, $\pi_1 (\phi)$ is an isomorphism, that is, $\pi_1 (SL_2 (R[X])) \cong \pi_1 (SL_2 (R))$. By induction, we have 
$\pi_1 (SL_2 (R[X_1, X_2, \cdots, X_n])) \cong \pi_1 (SL_2 (R))$, for all $n\in \mathbb{N}$.
\end{proof}

\begin{remark}
Let $R$ be a ring such that $\pi_1 (SL_2 (R)) \cong \mathbb{Z}^r$, for some non-negative integer $r$. Then by Theorem \ref{polynomial}, $\pi_1 (SL_2 (R[X_1, X_2, \cdots, X_n])) \cong \mathbb{Z}^r$. This looks like the freeness of a projective module over a polynomial ring over a field (\cite{DQ} and \cite{SU}). That is why we entitled the paper by Quillen-Suslin Theory.
\end{remark}

It is known that the real affine varieties of rings $R$ and $R/I$ are same, where $I$ is the nil radical of $R$. Suppose $X(\mathbb{R})$ is the real points of the real affine variety of $R$ as well as of $R/I$. Then by the motivation of $H^0(X(\mathbb{R}))$, there is a natural question that whether the groups $\pi_1 (SL_2 (R))$ and $\pi_1 (SL_2 (R/I))$ are isomorphic? We show that it is true. But to prove this result, we need the following two propositions.

\begin{proposition}\label{elementary connection}
Let $R$ be a ring and $I$ be the nil radical ideal of $R$. Consider $\alpha =\begin{pmatrix}
1+a_1 & a_3  \\
a_2 & 1+a_4 
\end{pmatrix}\in SL_2 (R)$, where $a_i \in I$, for every $1\leq i \leq 4$ i.e., $\alpha =I_2$ modulo $I$. Then $\alpha = E_{12}(c_1)E_{21}(c_2)E_{12}(-1+c_3)E_{21}(c_4)E_{12}(1)E_{21}(c_5)$, for some $c_i \in I$, for every $1\leq i \leq 5$. Moreover, we can find  a matrix $\beta(X) \in SL_2 (R[X])$ such that $\beta(0) = I_2,  \beta(1) = \alpha$ and $\beta(X) = I_2$ modulo $I[X]$.
\end{proposition}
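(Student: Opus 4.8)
The plan rests on two elementary observations about the nil radical $I$. First, every element of $I$ is nilpotent, so $v:=1+a_1$ and $v':=1+a_4$ are units, and their inverses again lie in $1+I$. Second, since $\alpha\equiv I_2\pmod I$, any factorization of $\alpha$ into elementary matrices must have ``constant parts'' (the images modulo $I$) that multiply to $I_2$; this is exactly why the target form carries the two handles $E_{12}(-1)$ and $E_{12}(1)$, whose product is $I_2$, while all the other parameters $c_i$ are required to sit in $I$. I would prove the factorization by reducing $\alpha$, through right multiplication by elementary matrices, to the prescribed shape, and then obtain $\beta(X)$ for free by rescaling the parameters.

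For the reduction, write $E_{12}(x)=\begin{pmatrix}1 & x\\ 0 & 1\end{pmatrix}$ and $E_{21}(x)=\begin{pmatrix}1 & 0\\ x & 1\end{pmatrix}$. Since $v'$ is a unit, right multiplication by $E_{21}(-c_5)$ with $c_5=a_2(v')^{-1}\in I$ clears the $(2,1)$ entry and, because the determinant is $1$, produces the upper triangular matrix $\begin{pmatrix}(v')^{-1} & a_3\\ 0 & v'\end{pmatrix}$. I then multiply on the right by the handle $E_{12}(-1)$ followed by $E_{21}(-c_4)$ and ask that the result equal $E_{12}(c_1)E_{21}(c_2)E_{12}(-1)$. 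Comparing the $(2,1)$, $(2,2)$ and $(1,2)$ entries yields the explicit solution
$$c_2=-a_4,\qquad c_4=a_4(v')^{-1},\qquad c_1=(v')^{-1}\bigl(1+a_3-(v')^{-1}\bigr),$$
together with the admissible choice $c_3=0$; each of these lies in $I$ precisely because $(v')^{-1}\in 1+I$ (for instance $c_1\equiv 1\cdot(1+0-1)=0\pmod I$). The remaining $(1,1)$ entry need not be matched by hand: three entries already agree, both sides have determinant $1$, and $v'$ is a unit, so the fourth entry is forced to agree as well. Undoing the three right multiplications then gives $\alpha=E_{12}(c_1)E_{21}(c_2)E_{12}(-1+c_3)E_{21}(c_4)E_{12}(1)E_{21}(c_5)$ with all $c_i\in I$.

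With the factorization in hand the homotopy is immediate: set
$$\beta(X)=E_{12}(c_1X)E_{21}(c_2X)E_{12}((-1+c_3)X)E_{21}(c_4X)E_{12}(X)E_{21}(c_5X).$$
As a product of elementary matrices $\beta(X)\in SL_2(R[X])$; every factor becomes $I_2$ at $X=0$, so $\beta(0)=I_2$, while $\beta(1)=\alpha$ by construction. Reducing modulo $I[X]$ annihilates each $c_i$ and collapses the product to $E_{12}(-X)E_{12}(X)=I_2$, so $\beta(X)\equiv I_2\pmod{I[X]}$.

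The one genuinely delicate point is the reduction step, and it is instructive to see why the handles are unavoidable: the subgroup of $SL_2(R)$ generated by elementary matrices with parameters in $I$ does not in general contain the diagonal matrix $\mathrm{diag}(v,v^{-1})$ (already for $R=k[\varepsilon]/(\varepsilon^2)$ such products can only produce $I_2$ plus an off-diagonal $\varepsilon$-term, never the diagonal twist), so one cannot reduce $\alpha$ to $I_2$ using $I$-valued parameters alone. The role of $E_{12}(-1)$ and $E_{12}(1)$ is exactly to supply this twist, and the main task is to verify, as above, that once they are inserted every remaining correction parameter can be solved for inside $I$ rather than merely in $R$.
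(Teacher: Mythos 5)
Your proposal is correct and takes essentially the same approach as the paper: both prove the factorization by explicit elementary-matrix manipulation built around the unit $1+a_4$ (whose inverse lies in $1+I$), the paper simply writing down the six factors directly with slightly different, but equally valid, constants (e.g.\ its $c_3 = 1-x$ versus your $c_3=0$), and both then obtain $\beta(X)$ by scaling parameters by $X$. The only cosmetic difference is that the paper leaves the handles as $E_{12}(-1+c_3X)$ and $E_{12}(1)$ while you scale them by $X$ as well; either choice meets the three required conditions.
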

\begin{proof} It is easy to see that
$$\alpha = \begin{pmatrix}
1 & (1+a_4)^{-1}a_2  \\
0& 1 
\end{pmatrix}\begin{pmatrix}
1 & 0  \\
x^{-1}a_3& 1 
\end{pmatrix}\begin{pmatrix}
1 & -x  \\
0& 1 
\end{pmatrix}\begin{pmatrix}
1 & 0  \\
a_4& 1 
\end{pmatrix}\begin{pmatrix}
1 & 1  \\
0& 1 
\end{pmatrix}\begin{pmatrix}
1 & 0  \\
x-1& 1 
\end{pmatrix},$$ where $x= (1+a_1)- (1+a_4)^{-1}a_2a_3$. 

Thus, $$\alpha = E_{12}(c_1)E_{21}(c_2)E_{12}(-1+c_3)E_{21}(c_4)E_{12}(1)E_{21}(c_5),$$ for some $c_i \in I$, $1\leq i \leq 5$.

Take $\beta(X) = E_{12}(c_1X)E_{21}(c_2X)E_{12}(-1+c_3X)E_{21}(c_4X)E_{12}(1)E_{21}(c_5X)$. Then $\beta(0) = I_2,  \beta(1) = \alpha$ and $\overline{\beta(X)} = \overline{I_2}$.
\end{proof}

\begin{proposition}\label{loop connection}
Let $R$ be a ring and $I$ be the nil radical ideal of $R$. Let $\alpha(X)\in SL_2 (R[X])$ such that $\alpha(0) = I_2 = \alpha(1)$. Suppose $\overline{\alpha(X)} = \overline{I_2}$ i.e., $\alpha(X) = I_2$ modulo $I[X]$. Then there exists a matrix $\beta(X, T) \in  SL_2 (R[X, T])$ such that $\beta(X, 0) = I_2, \beta(X, 1) = \alpha(X) $, $\beta(0, T) = I_2 =\beta(1, T)$ and $\overline{\beta(X, T)} = \overline{I_2}$.
\end{proposition}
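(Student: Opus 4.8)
The plan is to argue by induction on the nilpotency of a suitable ideal. Since $\alpha(X)-I_2$ has only finitely many entries, each a polynomial with finitely many nilpotent coefficients, these coefficients generate a finitely generated subideal $I_0\subseteq I$ which is nilpotent, say $I_0^N=0$; replacing $I$ by $I_0$ only strengthens the conclusion (as $I_0\subseteq I$), so I may assume $I$ itself is nilpotent and induct on $N$. The base case $N=1$ is trivial, since then $\alpha(X)=I_2$ and $\beta=I_2$ works. For the inductive step set $J=I^{N-1}$, so that $J^2=0$ and $IJ=0$; the decisive feature of this top layer is that the matrices congruent to $I_2$ modulo $J$ form an \emph{abelian} group: if $P,Q\in M_2(J)$ then $PQ=0$, so $(I_2+P)(I_2+Q)=I_2+P+Q$ and $\det(I_2+P)=1+\operatorname{tr}(P)$. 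This layer is thus linear and can be manipulated additively, which is what will let me correct boundary values without disturbing the determinant.

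In the inductive step I first pass to $R/J$. Its nilradical is $I/J$ with $(I/J)^{N-1}=0$, and $\overline{\alpha}(X)$ is a loop congruent to $I_2$ modulo $I/J$, so by the induction hypothesis there is a homotopy $\overline{\beta}(X,T)\in SL_2((R/J)[X,T])$ with $\overline{\beta}(X,0)=I_2$, $\overline{\beta}(X,1)=\overline{\alpha}(X)$, $\overline{\beta}(0,T)=\overline{\beta}(1,T)=I_2$, and $\overline{\beta}\equiv I_2$ modulo $I/J$. I then lift $\overline{\beta}$ to a matrix $\beta_0(X,T)\in SL_2(R[X,T])$ with $\beta_0\equiv I_2$ modulo $I$: lift the entries coefficientwise into $I[X,T]$, and repair the determinant (which equals $1$ modulo $J$, hence is a unit $1+j$ with $j\in J[X,T]$) by multiplying on the left by $\operatorname{diag}((1+j)^{-1},1)$, a matrix congruent to $I_2$ both modulo $J$ and modulo $I$. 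The resulting $\beta_0$ satisfies all four required boundary identities, but only modulo $J$.

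It remains to correct $\beta_0$ within the linear layer. I look for $c(X,T)=I_2+u(X,T)$ with $u\in M_2(J[X,T])$ and $\operatorname{tr}(u)=0$ (so that $c\in SL_2$ and $c\equiv I_2$ modulo $J\subseteq I$), and set $\beta=\beta_0\,c$. The four boundary identities for $\beta$ translate into prescribed values of $c$ on the four edges of the $(X,T)$-square, namely $c(X,0)=\beta_0(X,0)^{-1}$, $c(X,1)=\beta_0(X,1)^{-1}\alpha(X)$, $c(0,T)=\beta_0(0,T)^{-1}$ and $c(1,T)=\beta_0(1,T)^{-1}$; each is congruent to $I_2$ modulo $J$ because $\beta_0$ already realizes the boundary data modulo $J$, so each lies in the abelian layer and corresponds to a trace-zero element of $M_2(J)$. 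Under the group isomorphism $I_2+v\mapsto v$ onto the additive module of trace-zero matrices over $J$, the edge data become four polynomial paths $f_0,f_1,g_0,g_1$ in this module, and the hypotheses $\alpha(0)=\alpha(1)=I_2$ are exactly what forces the four corner values to agree. I can therefore fill in the interior by transfinite (Coons) interpolation, for instance
$$u(X,T)=(1-T)f_0(X)+Tf_1(X)+(1-X)g_0(T)+Xg_1(T)-B(X,T),$$
where $B$ is the bilinear blending term assembled from the four corner values; this $u$ is polynomial, trace-zero and $J$-valued, and restricts to the prescribed edges precisely because of the corner compatibility.

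Then $\beta=\beta_0\,c$ lies in $SL_2(R[X,T])$, is congruent to $I_2$ modulo $I$, and satisfies the four boundary conditions exactly, completing the induction. The main obstacle is this last step: over an arbitrary ring one cannot contract the loop by a naive linear homotopy $I_2+T(\alpha(X)-I_2)$ or by an $\exp/\log$ argument, since neither preserves the determinant without divisions. Isolating a square-zero layer is precisely what makes the determinant-preserving correction available, and verifying the corner compatibility — the one place where $\alpha(0)=\alpha(1)=I_2$ is genuinely used — is the point requiring care.
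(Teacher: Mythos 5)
Your proof is correct, but it is genuinely different from the paper's. The paper argues in one explicit step: since $\alpha(0)=\alpha(1)=I_2$ and $\alpha\equiv I_2$ modulo $I[X]$, its entries have the form $1+X(X-1)f_i(X)$, $X(X-1)f_j(X)$ with $X(X-1)f_i(X)\in I[X]$, and Proposition~\ref{elementary connection} (applied over $R[X]$, where the off-diagonal units exist because the relevant elements are nilpotent) gives a factorization
$\alpha(X)=E_{12}(h_1)E_{21}(h_2)E_{12}(-1+h_3)E_{21}(h_4)E_{12}(1)E_{21}(h_5)$
in which each $h_i(X)$ lies in $X(X-1)I[X]$. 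The homotopy is then obtained by simply scaling the parameters, $\beta(X,T)=E_{12}(h_1T)E_{21}(h_2T)E_{12}(-1+h_3T)E_{21}(h_4T)E_{12}(1)E_{21}(h_5T)$: at $T=0$, and likewise at $X=0,1$ (where all $h_i$ vanish), the surviving factors $E_{12}(-1)E_{12}(1)$ cancel, so all four boundary conditions and the congruence $\beta\equiv I_2$ modulo $I$ hold by inspection. Your route instead is a d\'evissage: reduce to a finitely generated (hence nilpotent) subideal, induct on the nilpotency index, lift a homotopy from $R/J$ with $J=I^{N-1}$ square-zero, and repair the four edge discrepancies inside the abelian congruence layer $\{I_2+u : u\in M_2(J),\ \operatorname{tr}(u)=0\}$ via a Coons-type interpolation, with $\alpha(0)=\alpha(1)=I_2$ entering exactly at the corner-compatibility check. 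Both arguments are complete (your corner and trace-zero verifications go through, and $\det(I_2+u)=1+\operatorname{tr}(u)$ over the square-zero layer is what keeps the correction in $SL_2$). The trade-off: the paper's proof is shorter and produces the homotopy in closed form, but leans on the ad hoc $2\times 2$ elementary factorization of Proposition~\ref{elementary connection}; yours is longer and needs the finite-generation reduction (which the paper avoids since it only inverts individual elements of the form $1+\text{nilpotent}$), but it is structurally more robust --- the square-zero-layer correction would extend essentially verbatim to $SL_n$, where no such explicit six-factor decomposition is available.
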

\begin{proof}
Since $\alpha(0) = I_2 = \alpha(1)$,  $\alpha(X) =\begin{pmatrix}
1+X(X-1)f_1(X) & X(X-1)f_3(X)  \\
X(X-1)f_2(X)& 1+X(X-1)f_4(X) 
\end{pmatrix}$, for some $f_i(X)\in R[X]$, $1\leq i \leq 4$. Further since  $\overline{\alpha(X)} = \overline{I_2}$,  $X(X-1)f_i(X) \in I[X]$, for all $1\leq i \leq 4$. Now by Proposition \ref{elementary connection},  $\alpha(X) = E_{12}(h_1(X))E_{21}(h_2(X))E_{12}(-1+h_3(X))E_{21}(h_4(X))E_{12}(1)E_{21}(h_5(X))$, for some $h_i(X)  = X(X-1)g_i(X) \in I[X]$, for every $1\leq i \leq 5$, where $g_i(X)\in R[X]$. 

Consider $\beta(X, T) = E_{12}(h_1(X)T)E_{21}(h_2(X)T)E_{12}(-1+h_3(X)T)E_{21}(h_4(X)T)E_{12}$ $(1)E_{21}(h_5(X)T)$. It is clear that $\beta(X, 0) = I_2, \beta(X, 1)$ $= \alpha(X) $ and $\overline{\beta(X, T)} = \overline{I_2}$. Since $h_i(0)  = 0 = h_i(1)$ for every $1\leq i \leq 5$, $\beta(0, T) = I_2 =\beta(1, T)$.
\end{proof}

\begin{theorem}
Let $R$ be a ring and $I$ be the nil radical ideal of $R$. Then $\pi_1 (SL_2 (R)) \cong \pi_1 (SL_2 (R/I))$.
\end{theorem}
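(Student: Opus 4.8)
The plan is to show that the canonical surjection $\phi\colon R\to R/I$ induces an isomorphism $\pi_1(\phi)\colon \pi_1(SL_2(R))\to \pi_1(SL_2(R/I))$. The technical engine behind both directions is a lifting device. Given any $\bar{M}\in SL_2((R/I)[Y])$ in the relevant variables, lift its entries arbitrarily to $R[Y]$ to get a matrix $M_0$; then $\det M_0 = 1+n$, where $n$ is a polynomial all of whose (finitely many) coefficients lie in $I$. Since those coefficients are nilpotent and finite in number, they generate a finitely generated nil, hence nilpotent, ideal, so $n$ is nilpotent and $1+n$ is a unit of $R[Y]$ with polynomial inverse $u$. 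Scaling the first row of $M_0$ by $u$ produces a matrix in $SL_2(R[Y])$ that still reduces to $\bar{M}$ modulo $I$. I will use this freely in one or two polynomial variables.

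For surjectivity, I start with a loop $\bar{\alpha}(T)$ over $R/I$ and lift it to $\alpha_0(T)\in SL_2(R[T])$ with $\overline{\alpha_0}=\bar{\alpha}$. The endpoints $A_0=\alpha_0(0)$ and $A_1=\alpha_0(1)$ satisfy $A_i\equiv I_2 \pmod{I}$ but need not equal $I_2$. Using \propref{elementary connection} to produce a path $\beta_B(T)$ from $I_2$ to $B=A_0^{-1}A_1$ that is $\equiv I_2$ modulo $I$, I set $\hat{\alpha}(T)=A_0^{-1}\alpha_0(T)\beta_B(T)^{-1}$. A direct check gives $\hat{\alpha}(0)=\hat{\alpha}(1)=I_2$ and $\overline{\hat{\alpha}}=\bar{\alpha}$, so $\pi_1(\phi)([\hat{\alpha}])=[\bar{\alpha}]$.

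For injectivity the decisive observation is that \propref{loop connection} already says any loop $\beta(T)$ over $R$ with $\overline{\beta(T)}=\overline{I_2}$ is null-homotopic, i.e.\ $[\beta(T)]=e$. So it suffices to show that a loop $\alpha(T)$ in the kernel is, up to multiplication by such a trivial loop, homotopic to a loop reducing to $I_2$. Given a null-homotopy $\bar{\gamma}(T,S)$ of $\bar{\alpha}$ over $R/I$, I lift it to $\gamma(T,S)\in SL_2(R[T,S])$ by the device above; the lifted $\gamma$ fails the exact boundary conditions only modulo $I$. I repair the two vertical edges by forming $\gamma''(T,S)=\gamma(0,S)^{-1}\gamma(T,S)C(T,S)$, where $C(T,S)$ is the $T$-parametrized path from $I_2$ to $(\gamma(0,S)^{-1}\gamma(1,S))^{-1}$ furnished by \propref{elementary connection} applied over $R[S]$. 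Then $\gamma''(0,S)=\gamma''(1,S)=I_2$, the slice $\gamma''(T,1)$ is a loop with $\overline{\gamma''(T,1)}=\overline{I_2}$, and $\gamma''$ is a based homotopy from $\gamma''(T,0)$ to $\gamma''(T,1)$; hence $[\gamma''(T,0)]=[\gamma''(T,1)]=e$ by \propref{loop connection}. Finally $Q(T)=\alpha(T)\gamma''(T,0)^{-1}$ is a loop with $\overline{Q}=\overline{I_2}$, so $[Q]=e$, and from $\alpha(T)=Q(T)\gamma''(T,0)$ I conclude $[\alpha(T)]=[Q]*[\gamma''(T,0)]=e$.

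The main obstacle is the boundary bookkeeping in the injectivity argument: the lifted homotopy $\gamma(T,S)$ satisfies the four boundary conditions only modulo $I$, and turning it into a genuine based homotopy over $R$ requires the $S$-parametrized version of \propref{elementary connection} together with care that every correction factor reduces to $I_2$. The lifting step itself is routine once one notes that the determinant is $1$ plus a nilpotent.
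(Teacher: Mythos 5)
Your proof is correct, and it runs on the same engine as the paper's: the induced map $\pi_1(\nu)$, the entry-wise lift with determinant correction by the unit $1+n$, and the two key results \propref{elementary connection} and \propref{loop connection}. Your surjectivity argument is the paper's up to cosmetic reorganization: you use one correction path to $A_0^{-1}A_1$ together with left translation by the constant $A_0^{-1}$, where the paper uses two paths $\theta_1,\theta_2$ attached to the two endpoints and forms $\theta_1(1-X)^{-1}\alpha(X)\theta_2(X)^{-1}$. The genuine divergence is in injectivity. The paper sidesteps your vertical-edge repair entirely by a small trick: since the null-homotopy over $R/I$ has both vertical edges equal to $\overline{I_2}$, it is itself a loop (in the loop variable) over the ring $(R/I)[T]$, so the already-proved surjectivity of $\pi_1(SL_2(R[T]))\to\pi_1(SL_2((R/I)[T]))$ hands them a lift whose vertical edges are exactly $I_2$; they then repair the two horizontal edges with \propref{loop connection}, sandwiching the lift as $\gamma_1(X,1-T)^{-1}\beta(X,T)\gamma_2(X,T)^{-1}$ to produce a single explicit based homotopy from $\alpha$ to $I_2$. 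You instead lift naively, repair the vertical edges by hand with \propref{elementary connection} over $R[S]$ (legitimate, since the nilradical of $R[S]$ is $I[S]$), and finish with the group law $[\alpha]=[Q]*[\gamma''(\cdot,0)]$ rather than with one explicit homotopy. Both routes are valid and of comparable length: the paper's reuse of surjectivity buys exact vertical edges for free, while your version keeps the lifting step elementary and lets the group structure of $\pi_1$ absorb the residual defect.
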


\begin{proof}
Consider the quotient ring homomorphism $\nu : R \rightarrow R/I$. Then we have a group homomorphism  $\pi_1 (\nu): \pi_1 (SL_2 (R)) \rightarrow \pi_1 (SL_2 (R/I))$ defined by $\pi_1 (\nu)([\alpha(X)]) = [\overline{\alpha(X)} ]$.

\textbf{Claim 1:} $\pi_1 (\nu)$ is surjective. 

Let $[\beta(X)] \in \pi_1 (SL_2 (R/I))$. Then $\beta(0) = \beta(1) = \overline{I_2}$. Suppose $\beta(X) = \begin{pmatrix}
F_1 (X) & F_3 (X)  \\
F_2 (X) & F_4 (X) \end{pmatrix}$, where $F_i (X) \in SL_2 (R/I[X])$, for every $1 \leq i \leq 4$. Then $F_i (X)  = f_i (X) + \lambda_i (X) $, for every $1 \leq i \leq 4$, where $f_i (X)  \in R[X]$ and $\lambda_i (X)  \in I[X]$.

Since $F_1(X)F_4(X)-F_2(X)F_3(X) = \bar{1}$, $f_1(X)f_4(X)-f_2(X)f_3(X) = 1+ \lambda(X)$ for some $\lambda (X)  \in I[X]$. Clearly $1+ \lambda(X)$ is a unit element in $R[X]$ and $(1+ \lambda(X))^{-1} = 1+ \lambda'(X)$ for some $\lambda'(X)  \in I[X]$. 

Let $\alpha(X) = \begin{pmatrix}
(1+ \lambda'(X))f_1 (X) & f_3 (X)  \\
(1+ \lambda'(X))f_2 (X) & f_4 (X) \end{pmatrix}$. Then $\alpha(X) \in SL_2 (R[X])$ and $\overline{\alpha(X)}= \overline{\beta(X)}$. Since $\overline{\beta(0)} = \overline{\beta(1)} =  \overline{I_2}$, $\overline{\alpha(0)} = \overline{\alpha(1)} =  \overline{I_2}$. So, by Proposition \ref{elementary connection}, we can find matrices $\theta_1 (X)$  and $\theta_2 (X)$  in $ SL_2 (R[X])$  such that 
\begin{align*}
\theta_1(0) =I_2, \theta_1(1) = \alpha(0),  \overline{\theta_1 (X)} =  \overline{I_2}\\
\theta_2(0) =I_2, \theta_2(1) = \alpha(1),  \overline{\theta_2 (X)} =  \overline{I_2}
\end{align*}
Consider $\gamma(X) =\theta_1 (1-X)^{-1} \alpha(X) \theta_2 (X)^{-1}$. Then $\overline{\gamma(X)}= \overline{\alpha(X)}$ and $\gamma(0) = \gamma(1) = I_2$. Therefore $\pi_1 (\nu)([\gamma(X)]) = [\overline{\gamma(X)} ]=  [\overline{\alpha(X)}]= [\overline{\beta(X)}] $.

\textbf{Claim 2:} $\pi_1 (\nu)$ is injective. 

Let $[ \alpha(X)] \in \ker (\pi_1 (\nu))$ i.e,, $[\overline{\alpha(X)}] = [ \overline{I_2}]$. Then there exists $\beta'(X, T)\in SL_2 (R/I[X, T])$ such that $\beta'(X, 0) = \overline{\alpha(X)}, \beta'(X, 1) =  \overline{I_2}$ and $\beta'(0, T) = \beta'(1, T) =  \overline{I_2}$.  

Since $\beta'(0, T) = \beta'(1, T) =  \overline{I_2}$,  $[\beta'(X, T)]\in \pi_1 (SL_2 (R/I [T]))$. Since $\pi_1 (\nu): \pi_1 (SL_2 (R[T])) \rightarrow \pi_1 (SL_2 (R/I [T]))$ is surjective, there exists a $\beta(X, T) \in SL_2 (R[X, T])$  such that $\overline{\beta(X, T)}= \overline{\beta'(X, T)}$ and $\beta(0, T) = \beta(1, T) =  I_2$.

Since $\overline{\beta(X, 0)} = \overline{\alpha(X)}$, $\overline{\beta(X, 0)\alpha(X)^{-1}} = \overline{I_2}$. Also we have $\beta(0, 0)\alpha(0)^{-1}=\beta(1, 0)\alpha(1)^{-1} = I_2$. By  Proposition \ref{loop connection}, we can find a matrix $\gamma_1 (X, T) \in SL_2 (R[X, T])$ such that $\gamma_1 (X, 0) = I_2, \gamma_1 (X, 1)  = \beta(X, 0)\alpha(X)^{-1}$ and $\gamma_1 (0, T) = I_2= \gamma_1 (1, T)$.

Since $ \overline{\beta(X, 1)} = \overline{I_2}$ and $\beta(0, 1)=\beta(1, 1) = I_2$, by  Proposition \ref{loop connection}, we can find a matrix $\gamma_2 (X, T) \in SL_2 (R[X, T])$ such that $\gamma_2 (X, 0) = I_2, \gamma_2 (X, 1)  = \beta(X, 1)$ and $\gamma_2 (0, T) = I_2= \gamma_2 (1, T)$.

Consider $\Gamma(X, T) = \gamma_1 (X, 1-T)^{-1}\beta(X, T)\gamma_2 (X, T)^{-1}$. Then $\Gamma(X,0) = \alpha(X)$, $\Gamma(X,1) = I_2$ and $\Gamma(0, T) = \Gamma(1, T) =  I_2$. Hence $[\alpha(X)] = [I_2]$. Therefore $\pi_1 (\nu)$ is injective. 
\end{proof}

Since projective variety $X(\mathbb{R})$ corresponding to a graded ring $R = \mathbb{R} \oplus R_1 \oplus \cdots $ is a cone and cone is connected, $H^{0}(X(\mathbb{R}), \mathbb{Z}) \cong H^{0}(\mathbb{R}, \mathbb{Z})$. The following theorem is an algebraic analogue of this and we will use the Swan-Weibel Homotopy Trick (\cite{LAM}, page 183) to prove it.
\begin{theorem}
Let $R = R_0 \oplus R_1 \oplus \cdots $ be a positively graded ring. Then $\pi_1 (SL_2 (R_0)) \cong \pi_1 (SL_2 (R))$.
\end{theorem}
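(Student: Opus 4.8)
The plan is to realize $R_0$ simultaneously as a subring and as a quotient of $R$, and then to use the Swan--Weibel homotopy trick to contract $R$ onto $R_0$ at the level of loops. Write $\epsilon : R \to R_0$ for the augmentation killing $R_1 \oplus R_2 \oplus \cdots$, and $i : R_0 \hookrightarrow R$ for the inclusion; both are unital ring homomorphisms, so by the functoriality of $\pi_1(-)$ set up at the beginning of this section they induce group homomorphisms $\pi_1(\epsilon)$ and $\pi_1(i)$. Since $\epsilon \circ i = \mathrm{id}_{R_0}$, we get $\pi_1(\epsilon) \circ \pi_1(i) = \mathrm{id}$, so $\pi_1(i)$ is injective and $\pi_1(\epsilon)$ is surjective. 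It therefore suffices to show that $\pi_1(i) \circ \pi_1(\epsilon)$ is also the identity on $\pi_1(SL_2(R))$, i.e. that every loop $\alpha(T)$ is homotopic to the loop obtained by applying $i \circ \epsilon$ to its entries.

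To produce this homotopy I would introduce the grading homomorphism $\Psi : R \to R[S]$, defined on a homogeneous decomposition $r = r_0 + r_1 + \cdots + r_n$ by $\Psi(r) = r_0 + r_1 S + \cdots + r_n S^n$. Because the grading is multiplicative, $\Psi$ is a ring homomorphism with $\Psi(1) = 1$; moreover $\Psi$ specializes to $\mathrm{id}_R$ at $S = 1$ and to $i \circ \epsilon$ at $S = 0$. Extending $\Psi$ coefficientwise to $R[T]$ and applying it to the entries of a loop $\alpha(T) \in SL_2(R[T])$ yields a matrix $H(S,T) := \Psi(\alpha)(S,T) \in SL_2(R[S,T])$, the determinant being preserved because $\det H = \Psi(\det \alpha) = \Psi(1) = 1$.

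It remains to check that $H$ is a homotopy in the sense of the definition of $\pi_1(SL_2(R))$. Setting $\gamma(T,S) = H(S,T)$, the two specializations in $S$ give $\gamma(T,1) = \alpha(T)$ and $\gamma(T,0) = (i \circ \epsilon)(\alpha)(T)$. The endpoint conditions $\gamma(0,S) = \gamma(1,S) = I_2$ follow from $\alpha(0) = \alpha(1) = I_2$: the entries of $I_2$ lie in $R_0$ and so are fixed by $\Psi$, whence $H(S,0) = \Psi(\alpha(0)) = I_2$ and $H(S,1) = \Psi(\alpha(1)) = I_2$. Thus $[\alpha(T)] = [(i \circ \epsilon)(\alpha)(T)]$ in $\pi_1(SL_2(R))$, which is precisely the statement $\pi_1(i) \circ \pi_1(\epsilon) = \mathrm{id}$. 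Combined with $\pi_1(\epsilon) \circ \pi_1(i) = \mathrm{id}$, this exhibits $\pi_1(i)$ and $\pi_1(\epsilon)$ as mutually inverse isomorphisms, proving $\pi_1(SL_2(R_0)) \cong \pi_1(SL_2(R))$.

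I expect the only points requiring genuine care to be bookkeeping: verifying that $\Psi$ respects multiplication (which is exactly where homogeneity of the grading is used, and why positivity is needed so that each decomposition is finite), and confirming the normalization $\gamma(0,S) = \gamma(1,S) = I_2$, which hinges on the base point $I_2$ having all its entries in degree zero. The conceptual content, namely that the one-parameter family $\Psi(-)(S,-)$ contracts $R$ onto $R_0$, is the Swan--Weibel trick; once $\Psi$ is in hand the required homotopy is essentially forced, so there is no substantial obstacle beyond checking these normalizations.
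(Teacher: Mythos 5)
Your proposal is correct and is essentially the paper's own argument: the paper proves injectivity of the induced map by projecting a null-homotopy onto $R_0[X,T]$ (which is exactly your retraction identity $\pi_1(\epsilon)\circ\pi_1(i)=\mathrm{id}$), and proves surjectivity via the Swan--Weibel homomorphism $h(T)(a_0+a_1+\cdots+a_n)=a_0+a_1T+\cdots+a_nT^n$ applied coefficientwise to the loop, which is precisely your $\Psi$. The only cosmetic differences are that you package the two halves as $\pi_1(i)$ and $\pi_1(\epsilon)$ being mutually inverse, and that you check the base-point conditions $\gamma(0,S)=\gamma(1,S)=I_2$ directly from $\Psi(I_2)=I_2$ rather than from the paper's normalized form $1+X(X-1)f_i(X)$ of a loop.
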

\begin{proof}
Consider the inclusion map $j :R_0 \rightarrow R$. Then the map $\pi_1 (j)  : \pi_1 (SL_2 (R_0)) \rightarrow \pi_1 (SL_2 (R))$ is a group homomorphism. Let 
$[\alpha_0 (X)] \in \ker (\pi_1 (j))$ i.e., $[\alpha_0 (X)] = I_2$. Then there exists $\beta(X, T) = \begin{pmatrix}
f_1 (X, T) & f_3 (X, T)  \\
f_2 (X, T) & f_4 (X, T) \end{pmatrix}\in SL_2(R[X, T])$ such that $\beta(X, 0) = \alpha_0 (X), \beta(X, 1) = I_2$ and $\beta(0,T) = \beta(1, T) = I_2$. Take $\pi(\beta(X, T)) = \begin{pmatrix}
\pi(f_1 (X, T)) & \pi(f_3 (X, T) ) \\
\pi(f_2 (X, T)) & \pi(f_4 (X, T)) \end{pmatrix}$, where $\pi: R[X, T] \rightarrow R_0[X, T]$ is the projection homomorphism. Then $\pi(\beta(X, T)) \in SL_2(R_0[X, T])$ with $\pi(\beta(X, 0)) = \alpha_0 (X)$, $\pi(\beta(X, 1)) = I_2$ and $\pi(\beta(0,T) )= \pi(\beta(1, T)) = I_2$. This show that $\ker (\pi_1 (j)) = \{[I_2]\}$. Hence $\pi_1 (j)$ is injective. 

Let $[\beta(X)] = \begin{pmatrix}
1+X(X-1)f_1 (X) & X(X-1)f_3 (X)  \\
X(X-1)f_2 (X) & 1+X(X-1)f_4 (X) \end{pmatrix}\in \pi_1 (SL_2 (R))$. Consider the Swan-Weibel ring homomorphism $h(T): R \rightarrow R[T]$ defined by $h(T)(a_0+a_1+\cdots+a_n) = a_0+a_1T+\cdots+a_nT^n (n \geq 0, a_i \in R_i)$. It is clear that $h(0) = a_0$ and $h(1) = a_0+a_1+\cdots+a_n$.

Suppose, for $i = 1, 2, 3, 4$, $f_i(X) = a^{i}_0 + a^{i}_1X+ \cdots + a^{i}_{r_i} X^{r_i}$, where $a^{i}_j \in A$ for every $0 \leq j \leq r_i$.
Take $F_i (X, T ) = h(T)(a^{i}_0) + h(T)(a^{i}_1)X+ \cdots + h(T)(a^{i}_{r_i}) X^{r_i}$, for all $i = 1, 2, 3, 4$. Clearly,  for $i = 1, 2, 3, 4$, $F_i (X, 0 ) = h(0)(a^{i}_0) + h(0)(a^{i}_1)X+ \cdots + h(0)(a^{i}_{r_i}) X^{r_i} \in R_0 [X]$ since $h(0)(a^{i}_j) \in R_0$ for every $0 \leq j \leq r_i$ and $F_i (X, 1) = h(1)(a^{i}_0) + h(1)(a^{i}_1)X+ \cdots + h(1)(a^{i}_{r_i}) X^{r_i} = f_i(X) $.

Now, consider $\beta_0(X)= \begin{pmatrix}
1+X(X-1)F_1(X, 0 )& X(X-1)F_3(X, 0 )  \\
X(X-1)F_2(X, 0 ) & 1+X(X-1)F_4 (X, 0 )\end{pmatrix}$. Since $h(T)$ is a ring homomorphism, $[\beta_0(X)] \in  \pi_1 (SL_2 (R_0))$. 

Take $\gamma(X, T)= \begin{pmatrix}
1+X(X-1)F_1(X, T )& X(X-1)F_3(X, T)  \\
X(X-1)F_2(X, T) & 1+X(X-1)F_4 (X, T)\end{pmatrix}$. Then $\gamma(X, 0)$ $= \beta_0(X), \gamma(X, 1)= \beta(X)$  and $\gamma(0, T)= I_2 = \gamma(1, T)$. Hence, $[\beta_0(X)] = [\beta(X)]$ in $\pi_1 (SL_2 (R))$. Therefore, $\pi_1 (j)([\beta_0(X)]) = [\beta(X)]$ i.e., $\pi_1 (j)$ is surjective. 
\end{proof}
The next result is an algebraic analogue of $H^0(Z) \cong H^0(X) \oplus H^0(Y)$, where $X$ and $Y$ are disjoint open subsets of a topological space $Z$ with $Z = X \cup Y$.
\begin{theorem} Let $R$ and $S$ be rings. Then
$\pi_1 (SL_2 (R\times S)) \cong \pi_1 (SL_2 (R))\oplus \pi_1 (SL_2 (S))$.
\end{theorem}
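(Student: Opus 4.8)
The plan is to exploit the fact that forming a product of rings commutes with adjoining polynomial variables and with the functor $SL_2$. Concretely, for any finite set of indeterminates $\underline{Y}$ there is a natural ring isomorphism $(R\times S)[\underline{Y}]\cong R[\underline{Y}]\times S[\underline{Y}]$, under which a matrix with entries in $(R\times S)[\underline{Y}]$ corresponds to a pair of matrices, one over $R[\underline{Y}]$ and one over $S[\underline{Y}]$. Since multiplication and determinant in a product ring are computed coordinatewise, the determinant-one condition splits across the two factors, giving a group isomorphism
$$SL_2\big((R\times S)[\underline{Y}]\big)\;\cong\;SL_2\big(R[\underline{Y}]\big)\times SL_2\big(S[\underline{Y}]\big).$$
I would record this at the three levels at which the definition of $\pi_1(SL_2(-))$ lives, namely $\underline{Y}$ empty, $\underline{Y}=\{T\}$, and $\underline{Y}=\{T,W\}$.

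Next I would invoke the functoriality set up at the start of Section 3. The projections $p_1\colon R\times S\to R$ and $p_2\colon R\times S\to S$ induce group homomorphisms $\pi_1(p_1)$ and $\pi_1(p_2)$, and I take
$$\Phi=\big(\pi_1(p_1),\pi_1(p_2)\big)\colon \pi_1(SL_2(R\times S))\longrightarrow \pi_1(SL_2(R))\oplus\pi_1(SL_2(S)).$$
Under the identification above, a loop $\alpha(T)\in SL_2((R\times S)[T])$ with $\alpha(0)=\alpha(1)=I_2$ is precisely a pair $(\alpha_1(T),\alpha_2(T))$ of loops, one in each factor, and $\Phi$ sends $[\alpha]$ to $([\alpha_1],[\alpha_2])$.

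For surjectivity I would take an arbitrary pair $([\alpha_1(T)],[\alpha_2(T)])$, form the single matrix $\alpha(T)$ over $(R\times S)[T]$ corresponding to $(\alpha_1,\alpha_2)$ (which is again a loop, since the boundary values are $(I_2,I_2)=I_2$ at $T=0$ and $T=1$), and observe $\Phi([\alpha])=([\alpha_1],[\alpha_2])$. For injectivity, suppose $\Phi([\alpha])=[I_2]$; then $\alpha_1\sim I_2$ in $SL_2(R)$ via a homotopy $\eta_1(T,W)\in SL_2(R[T,W])$ and $\alpha_2\sim I_2$ in $SL_2(S)$ via $\eta_2(T,W)\in SL_2(S[T,W])$. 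The pair $(\eta_1,\eta_2)$ corresponds, under the level-$\{T,W\}$ isomorphism, to a single $\eta(T,W)\in SL_2((R\times S)[T,W])$ whose four boundary conditions hold coordinatewise; hence $\eta$ is a homotopy witnessing $[\alpha]=[I_2]$. Thus $\Phi$ is an isomorphism.

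I do not anticipate a genuine obstacle: the whole argument is the single observation that product rings distribute over polynomial extensions and that both the group law on $SL_2$ and the homotopy relation defining $\pi_1$ are evaluated entrywise, hence coordinatewise in a product. The only points demanding care are bookkeeping---checking that the determinant-one constraint and all four boundary conditions of a homotopy really do split across the two factors, and that the operation $*$ on $\pi_1$ matches the direct-sum structure---none of which presents any difficulty.
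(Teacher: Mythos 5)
Your proposal is correct and follows essentially the same route as the paper: the paper's proof is a two-line observation that $SL_2(R\times S)\cong SL_2(R)\oplus SL_2(S)$ and $SL_2((R\times S)[X])\cong SL_2(R[X])\oplus SL_2(S[X])$, from which the isomorphism of fundamental groups follows. You have simply written out the coordinatewise bookkeeping (splitting of loops, homotopies, and the group law) that the paper leaves implicit.
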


\begin{proof}
Since $ SL_2 (R\times S) \cong  SL_2 (R)\oplus  SL_2 (S) $, $ SL_2 ((R\times S)[X]) \cong  SL_2 (R[X])\oplus  SL_2 (S[X]) $. Therefore $\pi_1 (SL_2 (R\times S)) \cong \pi_1 (SL_2 (R))\oplus \pi_1 (SL_2 (S))$.
\end{proof}

The following theorem is a version of Horrocks Theorem \cite{GH} for $\pi_1 (SL_2 (R))$.
\begin{theorem}
Let $f(X)$ be a monic polynomial in $R = A[X]$. Then we have an injective homomorphism from $\pi_1 (SL_2 (R))$ to $\pi_1 (SL_2 (R_f))$.
\end{theorem}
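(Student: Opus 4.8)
The map in question is $\pi_1(\eta)$, where $\eta : R \to R_f$ is the localization homomorphism; by the functoriality set up at the start of this section it is automatically a group homomorphism, so the only content is injectivity. First I would unwind what it means for a class to lie in $\ker \pi_1(\eta)$: a representative is a loop $\alpha(T) \in SL_2(R[T])$ with $\alpha(0)=\alpha(1)=I_2$ whose image dies in $\pi_1(SL_2(R_f))$, i.e. there is a based homotopy $\gamma(T,S) \in SL_2(R_f[T,S])$ with $\gamma(T,0)=\alpha(T)$, $\gamma(T,1)=I_2$ and $\gamma(0,S)=\gamma(1,S)=I_2$. The whole problem is then to replace $\gamma$, whose entries carry powers of $f$ in their denominators, by a based homotopy defined over $R$ itself. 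The useful observation is that the boundary faces $S=0,1$ and $T=0,1$ of $\gamma$ are already denominator-free, so the ``bad'' part of $\gamma$ is confined to the interior of the square.

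The engine is Quillen's splitting lemma (Lemma~\ref{Quillen}). Setting $\sigma(T,S)=\gamma(T,S)\gamma(T,0)^{-1}\in SL_2(R_f[T,S])$ gives $\sigma(T,0)=I_2$, so $S$ can serve as the Quillen variable. Since a single monic $f$ is not a product of two comaximal elements, Lemma~\ref{Quillen} cannot be applied to $f$ directly; instead I would run it in the standard local--global fashion, drawing the comaximal pair from the base ring $A$ rather than from $f$. Concretely, one considers the ideal of those $a \in A$ for which $\sigma$ can be trivialized over the corresponding localization, patches the two factorizations coming out of Lemma~\ref{Quillen}, and thereby reduces the whole statement to the case in which $A$ is local. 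This reduction is exactly where the monic hypothesis becomes indispensable: over a local base $A[X]/(f)$ is finite free over $A$, so the classical Horrocks phenomenon applies and the interior denominators can be cleared, descending $\gamma$ to a homotopy over $A[X]$. For non-monic $f$ this descent genuinely fails, mirroring the classical theory, which is why the hypothesis is stated as it is.

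Finally I would massage the descended matrix into a genuinely \emph{based} homotopy over $R$: the naive descent may disturb the normalizations $\gamma(T,0)=\alpha(T)$, $\gamma(T,1)=I_2$, $\gamma(0,S)=\gamma(1,S)=I_2$, and these would be restored by pre- and post-composing with paths to the identity, using the reversal-and-concatenation device $\theta(1-S)^{-1}(\cdots)\theta(S)^{-1}$ already exploited in the proofs of the preceding theorems of this section. I expect the main obstacle to be precisely the local monic step, the honest Horrocks core, where one must use the finiteness of $A[X]/(f)$ over the local ring $A$ to clear denominators while controlling determinants in $SL_2$; by contrast the local--global patching via Lemma~\ref{Quillen} and the final re-basing are technically fiddly but routine given the machinery already in place. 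Once injectivity of $\pi_1(\eta)$ is established, the theorem follows.
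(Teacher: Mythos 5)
Your setup is correct as far as it goes (the map is $\pi_1$ of the localization homomorphism, injectivity is the only issue, and Quillen's splitting with $S$ as the Quillen variable is the right engine), but the proposal has a genuine gap at exactly the point you yourself flag: the ``local monic step''. After reducing to local $A$ you assert that ``the classical Horrocks phenomenon applies and the interior denominators can be cleared''. Classical Horrocks is a statement about projective modules over $A[X]$, $A$ local, that become free over $A[X]_f$; it says nothing about based homotopies of loops in $SL_2$, and you give no mechanism for transferring it to the invariant $\pi_1(SL_2(-))$. Finiteness of $A[X]/(f)$ over a local ring $A$ does not by itself clear denominators in a homotopy $\gamma(T,S)\in SL_2(R_f[T,S])$. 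Since this step is the entire content of the theorem (the local--global patching and the re-basing being, as you say, routine), what you have is a reduction of the theorem to an unproved statement of essentially the same depth.

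The paper's proof shows that the detour through local rings is unnecessary, and also pinpoints where your plan turned aside: you write that ``a single monic $f$ is not a product of two comaximal elements, so Lemma~\ref{Quillen} cannot be applied to $f$ directly'', but the trick is to pass first to $R_{Xf}$ (harmless, since the hypothesis $[\alpha]=[I_2]$ in $\pi_1(SL_2(R_f))$ pushes forward to $\pi_1(SL_2(R_{Xf}))$) and substitute $Y=X^{-1}$, $g(Y)=X^{-\deg f}f(X)$. Monicity makes $g(0)$ a unit, so $(Y,g)$ is a unimodular, hence comaximal, pair in $A[Y]$, and $A[Y]_{Yg}=A[X]_{Xf}$. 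Lemma~\ref{Quillen} is then applied \emph{once}, to this pair, splitting the killing homotopy $\sigma(Y,T,S)$ (which satisfies $\sigma(Y,T,0)=I_2$) as $\sigma_1\sigma_2^{-1}$ with $\sigma_1$ over $A[Y]_g$ and $\sigma_2$ over $A[Y]_Y$; patching $\sigma_1(Y,T,1)$ against $\alpha(Y^{-1})(T)\sigma_2(Y,T,1)$ over the comaximal pair produces $\delta(Y,T)\in SL_2(A[Y,T])$, and evaluation at $Y=0$ (legal on the $A[Y]_g$ side precisely because $g(0)$ is a unit) and at $Y=1$ gives $[I_2]=[\delta(0,T)]=[\delta(1,T)]=[\alpha(1)(T)]$ in $\pi_1(SL_2(A))$. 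Note also an ingredient your plan omits and the paper needs: the preliminary homotopy $\gamma(X,T,S)=\alpha((X-1)S+1)(T)$, which shows $[\alpha(X)(T)]=[\alpha(1)(T)]$ in $\pi_1(SL_2(R))$, so that it suffices to kill the evaluated loop $\alpha(1)(T)$. You instead aim to descend the whole homotopy to one over $R$ with the original boundary data, which is a strictly stronger statement than the theorem requires; the paper's argument, being uniform in $A$, also makes clear that your localization over $A$ would buy nothing even if the local case were available.
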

\begin{proof}
Let $[\alpha(X)(T)] \in \pi_1 (SL_2 (R))$ such that $[\alpha(X)(T)] = [I_2]$ in $\pi_1 (SL_2 (R_f))$. Consider $\gamma(X, T, S) = \alpha((X-1)S +1)(T)\in SL_2 (A[X, T, S])$. Then, $\gamma(X, T, 0)= \alpha(1)(T), \gamma(X, T, 1)= \alpha(X)(T)$ and $\gamma(X, 0, S) = \gamma(X, 1, S) = I_2$. Therefore, 
$[\alpha(X)(T)]$ $=[\alpha(1)(T)]$ in $\pi_1 (SL_2 (R))$. So, to prove the theorem it is enough to show that $[\alpha(1)(T)] =  [I_2]$ in $\pi_1 (SL_2 (R))$. 

Since $[\alpha(X)(T)] = [I_2]$ in $\pi_1 (SL_2 (R_f))$, $[\alpha(X)(T)] = [I_2]$ in $\pi_1 (SL_2 (R_{Xf}))$. Let $Y = X^{-1}$ and $g(Y) = X^{-\text{deg}(f)}f(X)$. Since the constant term of $g$ is unit, $(Y, g(Y))$ is unimodular over $A[Y]$. Since $A[Y]_{Yg} = A[X, X^{-1}]_f = A[X]_{Xf}$, $[\alpha(Y^{-1})(T)] = [I_2]$ in $\pi_1 (SL_2 (A[Y]_{Yg}))$. Then there exists $\sigma(Y, T, S) \in SL_2 (A[Y]_{Yg}[T, S])$ such that $\sigma(Y, T, 0) = I_2, \sigma(Y, T, 1) = \alpha(Y^{-1})(T)$ and $\sigma(Y, 0, S) = I_2 = \sigma(Y, 1, S)$. By Lemma \ref{Quillen}, there exist $\sigma_1(Y, T, S) \in SL_2 (A[Y]_{g})$ and $\sigma_2(Y, T, S) \in SL_2 (A[Y]_{Y})$ such that $\sigma(Y, T, S) = \sigma_1(Y, T, S) \sigma_2^{-1}(Y, T, S)$. Therefore,  $\sigma_1(Y, T, 0)=\sigma_2(Y, T, 0) = I_2, \sigma_1(Y, T, 1) = \alpha(Y^{-1})(T)\sigma_2(Y, T, 1)$ and $\sigma_1(Y, 0, S) = \sigma_1(Y, 1, S)= \sigma_2(Y, 0, S) = \sigma_2(Y, 1, S) = I_2$. By patching $\sigma_1(Y, T, 1)$ and $\alpha(Y^{-1})(T)\sigma_2(Y, T, 1)$ together, we get $\delta(Y, T) \in SL_2 (A[Y, T])$ such that $\delta(Y, T) = \sigma_1(Y, T, 1)$ in  $SL_2 (A[Y]_{g})$ and $\delta(Y, T) = \alpha(Y^{-1})(T)\sigma_2(Y, T, 1)$ in  $SL_2 (A[Y]_{Y})$.

Take $\theta_1 (T, S)= \sigma_1(0, T, S)$ and $\theta_2 (T, S)= \sigma_2(1, T, S)$. Thus $[\delta(0, T)] = [\sigma_1(0, T, 1)]$ $= [I_2]$ and $[\sigma_2(1, T, 1)] = [I_2]$ in $\pi_1 (SL_2 (A))$ (so in $\pi_1 (SL_2 (R))$). Hence, we have
$[\delta(1, T)] = [\alpha(1)(T)][\sigma_2(1, T, 1)] = [\alpha(1)(T)]$ in $\pi_1 (SL_2 (A))$. We know $[\delta(0, T)] = [\delta(1, T)]$. So,  $ [\alpha(1)(T)]= [I_2]$ in $\pi_1 (SL_2 (A))$ (so in $\pi_1 (SL_2 (R))$).
\end{proof}

\section{Computation of Algebraic Fundamental group}
We start the section with a remark on the concepts of winding numbers.
\begin{remark}\label{winding} All the concepts mentioned in this remark are taken from \cite{JR}.
\begin{enumerate}
\item Let $f: I \rightarrow \mathbb{R}^2-\{0\}$ be  a loop. Then $g: I \rightarrow  S^1$ defined by $g(t) = \frac{f(t)}{||f(t)||}$ is a loop in $S^1$. Consider the standard 
covering map $p: \mathbb{R} \rightarrow S^1$. Then there exists a lifting $\tilde{g}: I \rightarrow \mathbb{R}$ of $g$ such that the following diagram 
\begin{tikzcd}
 {} & \mathbb{R} \arrow{d}{p}\\
I\arrow{ur}{\tilde{g}} \arrow{r}{g} & S^1 \\
\end{tikzcd} is commutative. Since $g$ is a loop, the difference $\tilde{g}(1) - \tilde{g}(0)$ is an integer. This integer is called the \textbf{winding number} of $f$.
\item Let $f, f': I \rightarrow \mathbb{R}^2-\{0\}$ be  two loops. We say that $f$ is free homotopic to $f'$ if there exists a continuous map $F : I \times I \rightarrow \mathbb{R}^2-\{0\}$ such that $F(t, 0)= f(t)$, $F(t, 1)= f'(t)$ and $F(0, s)= F(1, s)$, for all $s\in I$.

It is easy to see that if $f$ is free homotopic to $f'$, the \textbf{winding number} of $f$ is equal to the \textbf{winding number} of $f'$.
\item Let $f, f'$ be two loops in $S^1$, Let $\tilde{g}$ and $\tilde{g'}$ be lifting of $f$ and $f'$ respectively. Then $\tilde{g} + \tilde{g'}$ is a lifting of $f.f'$. In other words, the \textbf{winding number} of $f.f'$ is equal to the sum of the \textbf{winding number} of $f$ and the \textbf{winding number} of $f'$.
\item Consider the loop defined by $g(t) = e^{2\pi i t}$. Then $\tilde{g} (t) = t$ is a lift of $g$. Hence winding number of $g$ is $\tilde{g} (1) - \tilde{g} (0) =1$.
\end{enumerate}
\end{remark}

\begin{definition}
Let $[\alpha(T)]\in \pi_1 (SL_2 (\mathbb{R}))$. Then the first column of $\alpha(T)$ gives a loop in 
$\mathbb{R}^2-\{0\}$. We call the winding number of the loop given the first column of $\alpha(T)$ as the winding number of $[\alpha(T)]$ and we denote it by $w_{\alpha}$.
\end{definition}
The idea which we are going to use to prove the following theorem is given in \cite{BMS} (Example  4.5 (Stallings)).
\begin{theorem}\label{real number}
There exists a surjective homomorphism from $\pi_1 (SL_2 (\mathbb{R}))$ to $ \mathbb{Z}$.
\end{theorem}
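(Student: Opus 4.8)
The plan is to define the map by the winding number of the first column and then verify three things: that it is well defined, that it is a homomorphism, and that it is surjective. For a loop $\alpha(T)\in SL_2(\mathbb{R}[T])$ with $\alpha(0)=\alpha(1)=I_2$, each specialization at a real value of $T$ has determinant $1$, so its first column is a nonvanishing vector; thus $T\mapsto(\text{first column of }\alpha(T))$ is a genuine loop in $\mathbb{R}^2-\{0\}$ based at $(1,0)$, and I set $\Phi([\alpha(T)])=w_\alpha$. I would first check that $\Phi$ is well defined: if $\gamma(T,S)\in SL_2(\mathbb{R}[T,S])$ is a homotopy between $\alpha$ and $\beta$ with $\gamma(0,S)=\gamma(1,S)=I_2$, then the first column of $\gamma$ is a continuous map $F\colon I\times I\to\mathbb{R}^2-\{0\}$ with $F(0,s)=F(1,s)=(1,0)$, i.e. a free homotopy of loops; by Remark \ref{winding}(2) the winding number is a free-homotopy invariant, so $w_\alpha=w_\beta$.

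The key step, and the one I expect to be the main obstacle, is showing that $\Phi$ is a homomorphism, that is $w_{\alpha\beta}=w_\alpha+w_\beta$, because the group law $[\alpha]*[\beta]=[\alpha(T)\beta(T)]$ is matrix multiplication whereas the target $\mathbb{Z}$ is additive. My plan is to reduce to the rotation subgroup. Using Gram--Schmidt, every $g\in SL_2(\mathbb{R})$ factors as $g=Q(g)R(g)$ with $Q(g)\in SO(2)$ and $R(g)$ upper triangular with positive diagonal (here $\det R(g)=1$ forces both diagonal entries positive); since the first column of $R(g)$ is $(\rho,0)$ with $\rho>0$, the normalized first column of $g$ equals $Q(g)e_1$. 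Hence $w_\alpha$ depends only on the continuous loop $T\mapsto Q(\alpha(T))$ in $SO(2)\cong S^1$. The contraction $R(g)\rightsquigarrow I_2$ inside the upper-triangular subgroup gives a based homotopy from $\alpha(T)$ to $Q(\alpha(T))$ that keeps the first column nonzero; multiplying the corresponding homotopies for $\alpha$ and $\beta$ shows that the first-column loop of $\alpha(T)\beta(T)$ is freely homotopic in $\mathbb{R}^2-\{0\}$ to that of $Q(\alpha(T))Q(\beta(T))$. On $SO(2)$ the product is the rotation by the sum of the angles, which under $SO(2)\cong S^1$ is exactly the pointwise product of the loops $e^{i\phi(T)}$ and $e^{i\psi(T)}$; by Remark \ref{winding}(3) winding numbers add under such products, giving $w_{\alpha\beta}=w_\alpha+w_\beta$.

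Finally, for surjectivity it suffices to produce a single loop with winding number $1$. I would take the unimodular row
$$\big(8T^2-8T+1,\; T(T-1)(2T-1)\big)\in\Um_2(\mathbb{R}[T]),$$
whose two entries have no common complex root, so it is completable over the PID $\mathbb{R}[T]$. Tracing it shows that the first coordinate runs $1\to-1\to1$ while the second is positive on $(0,\tfrac12)$ and negative on $(\tfrac12,1)$, so the loop encircles the origin exactly once. Completing this row to a matrix in $SL_2(\mathbb{R}[T])$, and then right-multiplying by a suitable $E_{12}(\ell(T))$ with $\ell$ linear, chosen to restore $\alpha(0)=\alpha(1)=I_2$ without altering the first column, yields a loop $\alpha$ with $\Phi([\alpha])=1$. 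Since the image of $\Phi$ is a subgroup of $\mathbb{Z}$ containing $1$, it must be all of $\mathbb{Z}$, so $\Phi$ is the desired surjective homomorphism.
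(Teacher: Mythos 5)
Your proof is correct, and its skeleton is the same as the paper's --- both send $[\alpha(T)]$ to the winding number $w_\alpha$ of the first-column loop and verify well-definedness via free-homotopy invariance (Remark~\ref{winding}(2)) --- but your two main verifications proceed by genuinely different arguments. For the homomorphism property the paper stays explicit and algebraic: it exhibits the polynomial homotopy $H(t,s)=\bigl(f_1f_1'+(sg_1-(1-s)f_2)f_2',\ f_2f_1'+(sg_2+(1-s)f_1)f_2'\bigr)$ joining the first column of $\alpha(T)\alpha'(T)$ to the ``complex multiplication'' loop $(f_1f_1'-f_2f_2',\,f_2f_1'+f_1f_2')$, using the determinant relation $f_1g_2-f_2g_1=1$ to check that $H$ never hits the origin, and then invokes Remark~\ref{winding}(3). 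Your Gram--Schmidt (Iwasawa) reduction $g=Q(g)R(g)$ reaches the same conclusion more conceptually, by explaining that only the $SO(2)$-part of each loop matters; the price is that contracting the triangular factors is a continuous, non-polynomial homotopy --- perfectly legitimate, since the winding number is a topological invariant of continuous loops, but a step outside the paper's purely polynomial computation. (One small slip: it is the Gram--Schmidt normalization that makes $R_{11}>0$; then $\det R=1$ gives $R_{22}>0$, rather than the determinant alone forcing positivity.) For surjectivity the paper constructs a completely explicit matrix whose first column $\bigl(1+4T(1-T)(T^2-T-1),\,4T(1-T)(2T-1)\bigr)$ is obtained from the fourth power of the path $(1-T)+Ti$ and is freely homotopic to $e^{2\pi i t}$, hence has winding number $1$; you instead take any unimodular polynomial row of winding number $\pm 1$, complete it over the PID $\mathbb{R}[T]$, and repair the endpoint values with a right multiplication by $E_{12}(\ell(T))$, which leaves the first column untouched. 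Your route is more flexible --- the completion never has to be computed, and any row encircling the origin once will do --- while the paper's is self-contained and fully constructive. Either way the image is a subgroup of $\mathbb{Z}$ containing a generator, hence all of $\mathbb{Z}$.
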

\begin{proof}
Define $\eta : \pi_1 (SL_2 (\mathbb{R})) \rightarrow \mathbb{Z}$ by $\eta([\alpha(T)]) = w_{\alpha}$

\noindent \textbf{Claim:} $\eta$ is well defined.

Suppose $[\alpha(T)] = [\alpha'(T)]$. Then there exists a $\gamma(T, S)\in SL_2 (\mathbb{R}[T, S])$ such that $\gamma(T, 0)= \alpha(T)$, $\gamma(T, 1)= \alpha'(T)$ and $\gamma(1, S) = I_2 = \gamma(0, S)$. So the maps given by the first column of $\gamma(T, S)$ gives a free homotopy between the loops given by $\alpha(T)$ and $\alpha'(T)$. Hence $\eta([\alpha(T)]) = w_{\alpha} = w_{\alpha'}= \eta([\alpha'(T)])$.This shows that $\eta$ is well defined.

\noindent \textbf{Claim:} $\eta$ is a group homomorphism.

Let $[\alpha(T)], [\alpha'(T)] \in \pi_1 (SL_2 (\mathbb{R}))$. Suppose $\alpha(T) = \begin{pmatrix}
f_1 & g_1\\
f_2 & g_2
\end{pmatrix}$ and $\alpha'(T) = \begin{pmatrix}
f'_1 & g'_1\\
f'_2 & g'_2
\end{pmatrix}$. Consider the loops $F_1 = (f_1f'_1 + g_1f'_2, f_2f'_1 + g_2f'_2): I \rightarrow \mathbb{R}^2-\{0\}$ and $F_2 =(f_1f'_1 - f_2f'_2, f_2f'_1 + f_1f'_2): I \rightarrow \mathbb{R}^2-\{0\}$. Now we will show that these loops are free homotopic to each other.

Consider the continuous function $H(t, s) =(f_1f'_1 + (sg_1-(1-s)f_2)f'_2, f_2f'_1 + (sg_2+(1-s)f_1)f'_2): I \times I \rightarrow \mathbb{R}^2$. 

Suppose $H(t, s) = (0, 0)$ for some $(t, s)$. Thus we have the following equations
\begin{equation}\label{1}
f_1f'_1 + (sg_1-(1-s)f_2)f'_2 = 0
\end{equation}
\begin{equation}\label{2}
f_2f'_1 + (sg_2+(1-s)f_1)f'_2 = 0
\end{equation}
Consider $f_1 ~\text{Eq} ~(\ref{2}) -f_2~\text{Eq}~ (\ref{1}) $, then we have 
\begin{align*}
(s(f_1g_2 - f_2g_1) + (1-s)(f_1^2+f_2^2))f'_2 = 0\\
\Rightarrow (s+(1-s)(f_1^2+f_2^2))f'_2 = 0
\end{align*}
Since $(s+(1-s)(f_1^2+f_2^2))\ne 0$, $f'_2 (t) =0$. By (1) and (2), $f_1f'_1= 0 = f_2f'_1$. Since $f'_2 (t) =0$, $f'_1 (t) \ne 0$. Hence $f_1(t)= 0 = f_2(t)$ which is not possible. Hence $H(t, s) =(f_1f'_1 + (sg_1-(1-s)f_2)f'_2, f_2f'_1 + (sg_2+(1-s)f_1)f'_2): I \times I \rightarrow \mathbb{R}^2-\{0\}$. It is clear that $H(t, 0) = F_2(t)$, $H(t, 1) = F_1(t)$ and $H(0, s)= H(1, s)$ for all $s \in I$. This shows that the loops $F_1: I \rightarrow \mathbb{R}^2-\{0\}$ and $F_2: I \rightarrow \mathbb{R}^2-\{0\}$ are free homotopic to each other. Therefore, $\text{winding number of}~ F_1 = \text{winding number of}~ F_2 = \text{winding number of}~ (f_1, f_2)+ \text{winding number of}~ (f'_1, f'_2)$ (by Remark \ref{winding} (3)). 

Thus $\eta([\alpha(T)\alpha'(T)]) = \eta([\alpha(T)]) + \eta([\alpha'(T))]$, that is, $\eta$ is a group homomorphism. 

\noindent \textbf{Claim:} $\eta$ is surjective.

Consider $\gamma(t) = (1-t)+ ti$, where $i = \sqrt{-1}$. Then we have a path $\gamma: I \rightarrow \mathbb{R}^2 - \{0\}$, defined by $\gamma(t) = (1-t, t)$  joining the point $(1, 0)$ to $(0, 1)$. Now, consider the path $g$ on $S^1$ (so on $\mathbb{R}^2 - \{0\}$) joining the point $(1, 0)$ to $(0, 1)$, defined by $g(t) = e^{i\frac{\pi}{2}t} = (\cos \frac{\pi}{2}t, \sin \frac{\pi}{2}t)$.

Define a function $H: I\times I \rightarrow \mathbb{R}^2 - \{0\}$ by $H(t, s) = (1-s)\gamma(t)+ s g(t)$. Clearly, $H$ is a free homotopy between $\gamma$ and $g$.

Now, take $\gamma^2(t) = (1-2t)+ 2t(1-t)i$. So, we have a path $\gamma^2: I \rightarrow \mathbb{R}^2 - \{0\}$, defined by $\gamma^2(t) = (1-2t, 2t(1-t))$  joining the point $(1, 0)$ to $(-1, 0)$. Then $H^2 (t, s)$ (complex multiplication) gives a free homotopy between $\gamma^2$ and $g^2$.

Now, take $\gamma^3(t) = (1-t)(1-2t-2t^2)+ t(1-2t^2)i$.  So, we have a path $\gamma^3: I \rightarrow \mathbb{R}^2 - \{0\}$, defined by $\gamma^3(t) = ((1-t)(1-2t-2t^2), t(1-2t^2))$ joining the point $(1, 0)$ to $(0, -1)$. Then $H^3 (t, s)$ (complex multiplication) gives a free homotopy between $\gamma^3$ and $g^3$.

Finally, take $\gamma^4(t) = \big(1+4t(1-t)(t^2-t-1)\big)+\big(4t(1-t)(2t-1)\big)i$. So, we have a path $\gamma^4: I \rightarrow \mathbb{R}^2 - \{0\}$, defined by $\gamma^4(t) = \big(\big(1+4t(1-t)(t^2-t-1)\big), \big(4t(1-t)(2t-1)\big)$ joining the point $(1, 0)$ to $(1, 0)$. Then $H^4 (t, s)$ (complex multiplication) gives a free homotopy between $\gamma^4$ and $g^4$. In fact, the \textbf{winding number} of $\gamma^4$ and the \textbf{winding number} of $g^4$. Since by Remark \ref{winding} (4) the \textbf{winding number} of $g^4$ is $1$,  the \textbf{winding number} of $\gamma^4$ is $1$.

Consider \\
$\alpha(T) = \begin{pmatrix}
1+4T(1-T)(T^2-T-1) & T(1-T)(2T-1)(24T^2 -24T-29)\\
4T(1-T)(2T-1) & 1+4T(1-T)(24T^2 -24T-1)
\end{pmatrix}.$

Then $[\alpha(T)]\in \pi_1 (SL_2 (\mathbb{R}))$ and $w_\alpha = 1$. Hence, $\eta
([\alpha^n(T)])= n$ for all $n \in \mathbb{Z}$. This shows that $\eta$ is surjective.
\end{proof}

\noindent\textbf{Algebraic fundamental group of the coordinate ring of $S^1- \{(0, 1), (0, -1)\}$:}

Let $X= S^1$,  $U = S^1 -\{(0, 1)\}$ and  $V = S^1 -\{(0, -1)\}$. Then $U\cup V = S^1$ and  $U\cap V = S^1- \{(0, 1), (0, -1)\}$. Thus, we have the Mayer Vieoteris sequence 
$H^0 (S^1, \mathbb{Z}) \rightarrow H^0 (U, \mathbb{Z})\oplus H^0 (V, \mathbb{Z}) \rightarrow H^0(S^1- \{(0, 1), (0, -1)\}, \mathbb{Z})\rightarrow H^1 (S^1, \mathbb{Z}) \rightarrow H^1 (U, \mathbb{Z})\oplus H^1 (V, \mathbb{Z}) \rightarrow H^1(S^1- \{(0, 1), (0, -1)\}, \mathbb{Z})$. 

By the stereographic projection, $U\simeq \mathbb{R}$ and  $V \simeq \mathbb{R}$ ($\simeq$ means they are homeomorphic). Since $\mathbb{R}$ and $S^1$ are connected, $ H^0 (U, \mathbb{Z}) \cong H^0 (V, \mathbb{Z})\cong  H^0 (S^1, \mathbb{Z}) \cong \mathbb{Z}$. Further, since
$S^1- \{(0, 1), (0, -1)\}\simeq \mathbb{R} -\{a\}$ for some $a \in \mathbb{R}$  and $\mathbb{R} -\{a\}$ has two connected components, $ H^0(S^1- \{(0, 1), (0, -1)\}, \mathbb{Z}) \cong \mathbb{Z} \oplus \mathbb{Z}$.

On the other hand, since $\mathbb{R}$ is contractible and $\mathbb{R} -\{a\}$ has two contractible components, $H^1 (U, \mathbb{Z}) \cong H^1 (V, \mathbb{Z}) \cong \{1\}$ and $ H^1(S^1- \{(0, 1), (0, -1)\}, \mathbb{Z}) \cong \{1\}\oplus\{1\}$, where $1$ denotes the identity element.
 Also $H^1 (S^1, \mathbb{Z})\cong \mathbb{Z}$. Thus the above Mayer Vieoteris sequence is 
$\mathbb{Z} \rightarrow \mathbb{Z}\oplus \mathbb{Z} \rightarrow \mathbb{Z} \oplus \mathbb{Z}\rightarrow\mathbb{Z} \rightarrow \{1\}\oplus\{1\}\rightarrow\{1\}\oplus\{1\}$.  By the motivation of this sequence, we will try to give some glimpse about the algebraic fundamental group:

Consider $A = \frac{\mathbb{R}[X, Y]}{(X^2 + Y^2 -1)}$, $u = 1-y$ and $v = 1-y$, where $y$ denotes the image of $Y$ in $A$. Then $A_u = \mathbb{R}[\eta]_{1+\eta^2}$, where $\eta = \frac{x}{u}, u^{-1} = \frac{1}{2}({1+\eta^2})$ (\cite{SW}, Lemma 3.1). Similarly $A_v = \mathbb{R}[\eta]_{1+\eta^2}$, where $\eta = \frac{x}{v}, v^{-1} = \frac{1}{2}({1+\eta^2})$. This implies that $A_{uv} = \mathbb{R}[\eta]_{\eta^2(1+\eta^2)}$. Since $\mathbb{R}[\eta]_{1+\eta^2}$ is an Euclidean domain, $\Gamma(A_u) = \Gamma(A_v)= \Gamma(A_{uv}) = \{1\}$.
Now, consider algebraic Mayer Vieoteris sequence 
$\pi_1 (SL_2 (A)) \rightarrow \pi_1 (SL_2 (A_u)) \oplus \pi_1 (SL_2 (A_v)) \overset{\psi}\rightarrow \pi_1 (SL_2 (A_{uv}))\overset{\delta}\rightarrow \Gamma(A) \rightarrow \Gamma(A_u)\oplus\Gamma(A_v)\rightarrow \Gamma(A_{uv})$., where $\delta$ is the connecting map. Since $\Gamma(A_u) = \Gamma(A_v)= \Gamma(A_{uv}) = \{1\}$, $\Gamma$ is surjective. Hence we have a short exact sequence $$1\rightarrow \ker{\delta}(= Im(\psi))\rightarrow \pi_1 (SL_2 (A_{uv}))\overset{\delta}\rightarrow \Gamma(A) \rightarrow 1 \hspace{2cm} (7)$$

Since there exists a surjective homomorphism $f: \Gamma(A) \rightarrow \mathbb{Z}$ defined by $f([a, b]) = \text{deg}([a, b])$, $f\circ \delta:  \pi_1 (SL_2 (A_{uv})) \rightarrow \mathbb{Z}$ is a surjective map. Hence, we a short exact sequence $$1\rightarrow \ker{(f\circ \delta)}\rightarrow \pi_1 (SL_2 (A_{uv}))\overset{f\circ \delta}\rightarrow \mathbb{Z}\rightarrow 1 \hspace{2cm} (8)$$

Since $\mathbb{Z}$ is a free abelian group, the exact sequence (8) is split. That is, $$ \pi_1 (SL_2 (A_{uv})) \cong   \ker{(f\circ \delta)} \oplus  \mathbb{Z}$$
It is clear that $\ker{\delta} \subseteq \ker{(f\circ \delta)}$, $\mathbb{Z} \oplus \ker{\delta} \subseteq  \pi_1 (SL_2 (A_{uv})) $.

 Now, it is easy to observe that we have a surjective group homomorphism from $\ker{\delta} \rightarrow \pi_1 (SL_2 (\mathbb{R}))$. Since by Theorem \ref{real number}, we have a surjective group homomorphism  $\pi_1 (SL_2 (\mathbb{R}))\rightarrow \mathbb{Z}$, we have a surjective group homomorphism from $\ker{\delta} \rightarrow \mathbb{Z}$. Since $\mathbb{Z}$ is a free abelian group, $\mathbb{Z} \subseteq \ker{\delta}$. Therefore $\mathbb{Z} \oplus \mathbb{Z} \subseteq  \pi_1 (SL_2 (A_{uv})) $.

\begin{remark}
For a ring $R$, as we defined the group $\pi_1 (SL_2 (R))$, we can also define $\pi_1 (SL_n (R))$ for any $n \geq 3 $. Since the fundamental group of $SL_n(\mathbb{R})$ is $\mathbb{Z}_2$ for $n\geq 3 $, we can say that $\pi_1 (SL_n (R))$ is an algebraic analogue of $\text{Cont}(X(\mathbb{R}), \mathbb{Z}_2)$, where $R$ is the coordinate ring of a real variety $X$ with real points $X(\mathbb{R})$. Therefore, it is easy to develop the theory for $\pi_1 (SL_n (R))$ in a similar pattern of $\pi_1 (SL_2 (R))$.
\end{remark}
We will end this article by proposing the following problems:

\textbf{Proposed problems:} 
\begin{enumerate}
\item We know that $ H^0(S^1- \{(0, 1), (0, -1)\}, \mathbb{Z}) \cong \mathbb{Z} \oplus \mathbb{Z}$ and $\pi_1 (SL_2 (A_{uv}))$ is an algebra analogue of $ H^0(S^1- \{(0, 1), (0, -1)\}, \mathbb{Z})$, where $A = \frac{\mathbb{R}[X, Y]}{(X^2 + Y^2 -1)}$, $u = 1-y$ and $v = 1-y$, where $y$ denotes the image of $Y$ in $A$. In Section $(4)$, we have seen that $\mathbb{Z} \oplus \ker{\Gamma} \subseteq  \pi_1 (SL_2 (A_{uv})) $. Is it true that $\mathbb{Z} \oplus \mathbb{Z} \cong  \pi_1 (SL_2 (A_{uv}))$?
\item $\pi_1 (SL_2 (\mathbb{R})) \cong \mathbb{Z}$.
\item $\pi_1 \bigg(SL_2 \bigg(\frac{\mathbb{R}[X, Y]}{(Y -X^2)}\bigg)\bigg) \cong \mathbb{Z}$;
\item $\pi_1 \bigg(SL_2 \bigg(\frac{\mathbb{R}[X, Y]}{(Y^2 -X^3)}\bigg)\bigg) \cong \mathbb{Z}$;
\item $\pi_1 \bigg(SL_2 \bigg(\frac{\mathbb{R}[X, Y]}{(Y^2 -X^2 (X+1))}\bigg)\bigg) \cong \mathbb{Z}$;
\item $\pi_1 \bigg(SL_2 \bigg(\frac{\mathbb{R}[X, Y]}{(XY-1)}\bigg)\bigg) \cong\mathbb{Z} \oplus \mathbb{Z}$;
\item $\pi_1 \bigg(SL_2 \bigg(\frac{\mathbb{R}[X, Y]}{(Y^2 -X (X^2-1))}\bigg)\bigg) \cong  \mathbb{Z} \oplus \mathbb{Z}$;
\end{enumerate}


\begin{thebibliography}{00}
\bibitem{BG}
A. Bak and A. S. Garge, {Global actions and vector K-theory}, Forum Math. Sigma, 8, e4, 29 (2020).
\bibitem{BMS}
H. Bass, H. Milnor and J. P. Serre, {Solution of the congruence subgroup problem for $SL_n$ ($n\geq 3$) and $Sp_{2n}$ ($n\geq 2$)}, Publications mathématiques de l’I.H.É.S., tome 33 , 59-137 (1967).
\bibitem{GH}
G. Horrocks, {Projective modules over an extension of a local ring}, Proc. Lond. Math. Soc., 14, 714-718 (1964).
\bibitem{KV}
M. Karoubi, O. Villamayor, {K-theorie algebrique et K-theorie topologique}, I. Math. Scand. 28, 
265–307 (1972).
\bibitem{MK}
M. I. Krusemeyer, {Fundamental groups, algebraic K-theory, and a problem of Abhyankar}, Invent. Math., 19, 15-47 (1973).
\bibitem{LAM}
T. Y. Lam, {Serre’s problem on projective modules}, Springer Monographs in Mathematics, Springer-Verlag, Berlin, 2006.
\bibitem{JR}
J. R. Munkres, {Topology (Second ed.)}, Prentice Hall, Inc., Upper Saddle River, NJ, 2000.
\bibitem {DQ}
D. Quillen, {Projective modules over polynomial rings}, Invent. Math., 36, 167 - 171 (1976).
\bibitem{RSS}
R. Sridharan, S. K. Yadav and S. K. Upadhyay, {On an algebraic analogue of the Mayer-Vietoris sequence}, Leavitt Path Algebras and Classical K-Theory, Indian Statistical Institute Series. Springer, 261-279 (2020).
\bibitem {SU}
A. A. Suslin; Projective modules over polynomial rings are free (Russian),
Dokl. Akad. Nauk SSSR, 229 (5), 1063-1066  (1976).
\bibitem {SW}
R. G. Swan, {Algebraic vector bundles on the 2-sphere}, Rocky Mountain J. Math., 23 (4) , 1443 - 1469 (1993).
\bibitem{CTC}
C.T.C. Wall, {A geometric introduction to topology}, Addison-Wesley Publishing Co, Reading, 1972.
\end{thebibliography}
\end{document}